\documentclass[11pt]{article}
\usepackage{amsmath,amsthm,amssymb,comment,amsfonts,eucal,amscd}
\providecommand{\keywords}[1]{\textbf{\textit{Keywords---}} #1}
\begin{document}
\newtheorem{thm}{Theorem}[section]
\newtheorem{cor}[thm]{Corollary}
\newtheorem{lem}[thm]{Lemma}
\newtheorem{coranddef}[thm]{Corollary and Definition}
\newtheorem*{conj}{Conjecture}
\newtheorem*{thm*}{Theorem}
\theoremstyle{definition}
\newtheorem{defn}[thm]{Definition}
\newtheorem{nota}[thm]{Notation}
\newtheorem*{conv*}{Convention}
\newtheorem{exa}[thm]{Example}
\newtheorem*{subl*}{Sublemma}
\theoremstyle{remark}
\newtheorem{rem}[thm]{Remark}
\newcommand{\lm}{\lambda}
\newcommand{\ep}{\epsilon}
\newcommand{\sg}{\sigma}
\newcommand{\oF}{\overline{F}}
\newcommand{\bbc}{\mathbb{C}}
\newcommand{\bbz}{\mathbb{Z}}
\newcommand{\mnn}{\ensuremath { M_{n,n}}}
\newcommand{\jtr}{\mbox{$J$-trace}}
\newcommand{\superc}{supercommutative}
\newcommand{\sigc}{\sigma C_1^{\ep_1}\cdots C_d^{\ep_d}}
\newcommand{\xd}{x_1,\ldots,x_d}
\newcommand{\vd}{V^{\otimes d}}
\newcommand{\ootimes}{\otimes\cdots\otimes}
\def\noqed{\renewcommand{\qedsymbol}{}}
\title{$J$-Trace Identities and Invariant Theory}
\author{Allan Berele\\ Department of Mathematics\thanks{Partially supported by a Faculty Research Grant from DePaul University}\\ DePaul University\\ Chicago, IL 60614}
\maketitle
\begin{abstract}{We generalize the notion of trace identity to \jtr.  Our main result is that all \jtr\ identities of $M_{n,n}$ are consequence of those 
of degree $\frac12 n(n+3)$.  This also gives an indirect description of the queer trace identities of $M_n(E)$.}\end{abstract}
\keywords{Trace identities, \jtr-identities, queer trace identities, invariant theory, Sergeev algebra}

\section{Introduction}
\subsection{Traces, Supertraces and $J$-Traces}
Let $F$ be a field.  A trace function on an $F$-algebra $A$ is a linear map from $A$ to a commutative $F$-algebra, typically the center of~$A$, such
that $tr(ab)=tr(ba)$ for all $a,\ b\in A$.  Even if the 
image of the trace map is not in the center of~$A$, we
will always take it to be in a commutative algebra
that acts on~$A$.   One generalization that has been studied is that in
which $A$ is a $\bbz_2$-graded algebra, or superalgebra, $A=A_0\oplus A_1$.  In this case one 
studies supertraces, which are linear, homogeneous (degree preserving) maps from $A$ to a supercommutative ring which has a graded action on $A$
such that $str(ab)=(-1)^{\alpha\beta}str(ba)$  for all homogeneous $a\in A_\alpha$, $b\in A_\beta$.  By the homogenuity
of the supertrace we also have $a\cdot str(b)=(-1)^{\alpha\beta}str(b)a$. A basic example
is $A=M_n(E)$, where $E$ is the infinite Grassmann algebra with its usual grading making it into a supercommutative
algebra, and $A_i=M_n(E_i)$ for $i\in\bbz_2$.  If $a=(a_{ij})\in M_n(E)$ then the supertrace of~can be defined as 
$str(a)=\sum a_{ii}$. 

 There are other gradings and supertraces defined on $M_n(E)$.  Let $\omega$ be the involution on
$E$ defined by $\omega(e)=(-1)^\alpha e$ for all $e\in E_\alpha$.  Given $k+\ell=n$, we embed
$E$ into $M_n(E)$ by mapping each $e$ to
the  diagonal matrix
$$\omega_{k,\ell}(e)=diag(\underbrace{e,\ldots,e}_k,\underbrace{\omega(e),\ldots,\omega(e)}_\ell),$$
and define $M(k,\ell)_0$ to be the elements of $M_n(E)$ which commute with $E$, and $M(k,\ell)_1$ to
be the  ones that supercommute.  It is not hard to see that $M(k,\ell)_0$ consists of all $n\times n$ matrices of the form $\left(\begin{smallmatrix}
A&B\\C&D\end{smallmatrix}\right)$ in which $A\in M_k(E_0)$, $D\in M_\ell(E_0)$, and $B$ and~$C$ have entries in~$E_1$;
and $M(k,\ell)_1$ is the opposite.  The algebra $M(k,\ell)_0$ is generally denoted $M_{k,\ell}$.  The supertrace on $M(k,\ell)$
is defined on homogeneous elements of degree~$d$ as
$$str(a_{ij})=\sum_{i=1}^k a_{ii}-(-1)^d\sum_{i=k+1}^n a_{ii},$$ and extended to all of $M(k,\ell)$ by linearity.
It is useful to consider $M(k,\ell)$ to be an $E$-algebra by letting each $e\in E$ act as $\omega_{k,\ell}(e)$.
With this action we have that the supertrace is $E$-linear in the natural sense that
$str(ea)=e\cdot str(a)$ for all~$a$.

In this paper we will be interested in algebras~$A$ with supertrace with a degree one element~$j$ such $j^2=1$.  The principal example
is $M(n,n)$ and $j=\left(\begin{smallmatrix}0&I_n\\I_n&0\end{smallmatrix}\right)$.  For any such algebra $A_1=jA_0=A_0j$ and $A_0=jA_1=
A_1j$.  Moreover, if $a\in A_0$ commutes with~$j$ then
$$str(a)=str(j^2a)=-str(jaj)=-str(a)$$ and unless the characteristic is~2 this implies $str(a)=0$.
\subsection{Universal Algebras}
Let $X$ be an infinite set.  Define $\oF\langle X,tr\rangle$ to be the commutative $F$-algebra generated
by the symbols $tr(u)$ for $u$ in the free algebra $F\langle X\rangle$ with the relations $tr(\alpha u)=
\alpha tr(u)$, $tr(u+v)=tr(u)+tr(v)$ and $tr(uv)=tr(vu)$ for all $\alpha\in F$, $u,v\in F\langle X\rangle$.
There is an obvious trace map from $F\langle X\rangle$ to $\oF\langle X,tr\rangle$ with the universal 
property that given any algebra homomorphism $F\langle X\rangle\rightarrow A$, where $A$ is an
algebra with trace $A\rightarrow C$, there is a unique map $\oF\langle X,tr\rangle\rightarrow C$
making the following diagram commute:

$$\begin{CD} {F\langle X\rangle} @>>> A\\
@VVV @VVV\\
{\oF\langle X,tr\rangle} @>> >C
\end{CD}$$
Let $F\langle X,tr\rangle$ be the tensor product $\oF\langle X,tr\rangle\otimes F\langle X\rangle$.
There are two traditional approaches to defining a trace function on $F\langle X,tr\rangle$, the
issue being how to define $tr(tr(u))=tr(u)tr(1)$.  Either one defines a new commuting variable
$\gamma=tr(1)$, or else chooses an integer~$n$ and sets $tr(1)=n$ in order to study algebras 
with $tr(1)=n$.  At any rate, if $A$ is an $F$-algebra with trace function to the central subalgebra~$C$,
(and with $tr(1)=n$ if we are in the latter case), then the above universal property takes the simpler
form that given any set theoretic map $X\rightarrow A$ there is a unique extension to a  homomorphism $F\langle X,tr\rangle \rightarrow A$ which preserves trace.
  Elements of  $F\langle X,tr\rangle$ are called trace polynomials or mixed trace polynomials, and ones which are 
zero under every trace preserving homomorphism 
$F\langle X,tr\rangle\rightarrow A$, for a given~$A$, are called trace identities for~$A$; and
elements of  $\oF\langle X,tr\rangle$ are called pure trace polynomials and ones that are
identities for some~$A$ are called pure trace identities of~$A$.  The set of trace identities for~$A$
will be denoted $I(A,tr)$ and it is an ideal of $F\langle X,tr\rangle$. The quotient will be
denoted $U(A,tr)$ and it has the expected universal properties.  Likewise $\bar{I}(A,tr)=I(A,tr)\cap \oF
\langle X,tr,\rangle$ and $\bar{U}(A,tr)=\oF\langle X,tr\rangle/\bar{I}(A,tr) $.

Given two sets of variables $X$ and $Y$, considered
degree~0 and degree~1, respectively, there are easy generalizations of these concepts to $\oF\langle X,Y,
str\rangle$ and $F\langle X,Y,
str\rangle$, the free superalgebra with supertrace, including the notations of supertrace
identities or pure supertrace identities.  Note that $\oF\langle X,Y,
str\rangle$ is required to be supercentral in $F\langle X,Y,
str\rangle$ so that if $u\in \oF\langle X,Y,str\rangle$ is degree one, then it anticommutes with all degree one
elements.  For example, the Grassmann algebra has a supertrace map $str:E\rightarrow E$ given by the function $\omega$ defined above.  With respect to this
supertrace $E$ satisfies the mixed supertrace identities
$$str(x)=x\mbox{ and }str(y)=-y$$
and all supertrace identities of $E$ are consequences of these two.

For our purposes we will need  another generalization.  We will be considering
superalgebras with supertrace, which have a degree one element $j$ whose square is equal to~1.  In such a case we say that the
algebra has a $J$-trace.

 We define
$F\langle X,\jtr\rangle$ to be the free supertrace  algebra $F\langle X, \{J\},str\rangle$
modulo the relation that $J^2=1$.  Note that all variables are even and $J$ is odd.
  Elements of $F\langle X, \jtr\rangle$ are called $J$-trace polynomials, and elements of $\bar{F}
\langle X,\{J\},\jtr\rangle$ are called pure $J$-trace polynomials.   And, for
 algebras $A$ with supertrace and designated odd
 idempotent~$j$, one defines $I(A,\jtr)$ as the ideal
of \jtr\ identities for $A$ and $U(A,\jtr)$ the corresponding
universal algebra.  Likewise, $\bar{I}(A,\jtr)$ will be the 
pure \jtr\ identities, and $\bar{U}(A,\jtr)$ will be
the quotient $\oF\langle X,\jtr\rangle/\bar{I}(A,\jtr)$.  For example, $M(1,1)$ satisfies the mixed
\jtr\ identity 
\begin{equation}
2[Jx_1+x_1J,Jx_2+x_2J]=str(Jx_1+x_1J)str(Jx_2+x_2J)\label{eq:1}
\end{equation}

From one point of view \jtr\ identities are identities of $A_0$ since each
variable $x_i$ is degree zero and substituted by elements of~$A_0$ only.
For this reason we may speak of the \jtr\ of~$A_0$.
On the other hand, if we wanted to describe a graded \jtr\ identity
$f(x_1,\ldots,x_k,y_1,\ldots,y_\ell)$ in both even and odd variables
we could simply let $$g(x_1,\ldots,x_{k+\ell})=f(x_1,\ldots,x_k,Jx_{k+1},
\ldots,Jx_{k+\ell}).$$  Then $f$ will vanish under all graded substitutions
$x_i\mapsto a_i\in A_0$, $y_j\mapsto a_j\in A_1$ if and only if $g$
vanishes under all substitutions $x_i\mapsto a_i\in A_0$.
\subsection{\jtr-identities of $M_{n,n}$}
Our main result is the determination of the \jtr-identities of 
$M(n,n)$ in charcteristic zero.  Following
in the footsteps of Procesi, see~\cite{P}, our approach is based on invariant 
theory.  Procesi's work in~\cite{P} is based on the invariant theory
of the general linear group which in turn is based on the
double centralizer theorem between the general linear group
and the symmetric group.  The progression of ideas is similar
here.  In \cite{S85} Sergeev proved a double centralizer theorem for 
the queer superalgebra; and in \cite{B13} we used Sergeev's theory
to study the invariant theory of the queer superalgebra.  We proved
that the pure \jtr\ polynomials in~$d$ variables give all maps $M(n,n)^d\rightarrow
E$ invariant under the action of the queer superalgebra.  In the language of
invariant theory this theorem could be called the First Fundamental Theorem
for the queer superalgebra and in the same spirit the current work, giving
the relations between the \jtr\ polynomials would be the Second
Fundamental Theorem.

In section~2 of the current work we show how to associate
multilinear pure \jtr-polynomials with elements of the Sergeev
algebra and we show that this association has properties
similar to Razmyslov and Procesi's association of trace polynomials with
the group algebra of the symmetric group.  (For the reader
familiar with polynomial identities:  But not analogous to
Regev's association of ordinary multilinear  polynomials with
elements of the group algebra of the symmetric group.) Then in section~3
we describe which elements of the Sergeev algebra 
correspond to \jtr-polynomial identities for $M(n,n)$ and prove our main theorem, that
in characteristic zero all such identities are consequences
of the ones of degree $\binom{n+2}2$.  It then follows from the
non-degeneracy of the \jtr\ that all mixed \jtr-identities are
consequences of the ones of degree~$\binom{n+2}2-1=\frac12n(n+3)$.

\subsection{Queer Traces}
Readers familiar with Kemer's structure theory of p.i.~algebras (see~\cite{K}) will know that it is based on the three
families of verbally prime algebras, $M_n(F)$, $M_{k,\ell}$,
and $M_n(E)$.  Although the polynomial identities are
not well understood for any of the families, the trace
identities are known for the first two:  The case of trace identities of  matrices
was done by Razmysolv and Procesi, see~\cite{Ra74}
and~\cite{P}, and the case of $M_{k,\ell}$ by Razmyslov
and Berele, see \cite{Ra85} and \cite{B88}.  The third
family $M_n(E)$ is not graced with a natural trace, but it
does have a queer trace.  The queer trace of a matrix
with Grassmann entries is defined to be the degree one part
of the sum of the diagonal entries.  This map is similar
to a trace function in that it satisfies $qtr(ab)=qtr(ba)$, but
unlike  ordinary traces, queer trace anticommute, $qtr(a)
qtr(b)=-qtr(b)qtr(a).$  In this section we show how the
theory of \jtr-identities of $M(n,n)$ can be used to describe
the queer trace idenities of $M_{n}(E)$.

In general there are two ways to construct algebras with
queer trace from graded algebras with supertrace.  If
$A$ is such an algebra and if $str(A_0)=0$ then it is not
hard to see that $str(ab)=str(ba)$ and $str(a)str(b)=-str(b)str(a)$
and so the supertrace is a queer trace.  So, given any
superalgebra $A$ with supertrace, we can simply define
$qtr(a)$ to be the degree one part of $str(a)$.  This is how
the queer trace on $M_n(E)$ was defined.  The second construction
can be used if in addition
$A$ had an idempotent element $j\in A_1$.  In this case we could
let $B$ be the centralizer of $j$.  $B$ would be a graded
superalgebra with $str(B_0)=0$ and so the supertrace on~$B$
would be a queer trace.  For our purposes we will need
the case of $A=M(n,n)$.  The centralizer of $j$ is the set of matrices of the form $\left(\begin{smallmatrix} A&B\\ B&A
\end{smallmatrix}\right)$. This algebra is denoted $SM(n,n)$, and the degree zero part will be denoted $S\mnn$. ($S$ for symmetric.)

There are technical issues in defining mixed queer trace
identies since queer traces are not central.  Pure
queer trace polynomials can easily be defined as the
free algebra on the symbols $qtr(u)$ where $u$ is a monomial
modulo the obvious relations.   As an example, the Grassmann algebra satisfies the mixed queer trace identity
\begin{equation}[x_1,x_2]=2qtr(x_1)qtr(x_2)\label{eq:1a}\end{equation} and the pure queer trace identities
\begin{align}qtr(x_1[x_2,x_3])&=qtr(x_1)qtr(x_2)qtr(x_3),\label{eq:2}\\qtr(x_1)qtr(x_2x_3)&+
qtr(x_2)qtr(x_1x_3)+qtr(x_3)qtr(x_1x_2)=0.\label{eq:3}\end{align}
The reader may wish to show that \eqref{eq:2} and \eqref{eq:3} are each
consequences of \eqref{eq:1a}.  The intrepid reader may wish to show that all
queer trace identities (appropriately defined) of~$E$ are consequences of~\eqref{eq:1a}, and that all pure queer trace identities
are consequences of \eqref{eq:2} and \eqref{eq:3}.
  
There is an isomorphism from $S\mnn$ to $M_n(E)$ given by
$$\begin{pmatrix}A&B\\ B&A\end{pmatrix}\mapsto A+B.$$  It is
desirable to make this map queer trace preserving and to that end
we define $\widetilde{qtr}$ on $S\mnn$ as
$$\widetilde{qtr}\begin{pmatrix}A&B\\ B&A\end{pmatrix}=tr(B),$$
or, more naturally, as $\widetilde{qtr}(a)=\frac12str(ja)$,

Since we can describe all \jtr-identities of $M(n,n)$ the
following theorem will describe all queer trace identities of
$M_n(E)$.  If $A$ is \jtr-algebra then we make the
univeral algebra $U(A,\jtr)$ into an algebra with 
queer trace by modding out by the relation that
$J$ is central.  Denote by $Q$ the map $$Q:\oF 
\langle X,\jtr\rangle\rightarrow \oF\langle X,qtr\rangle,$$ from supertrace
polynomials to queer trace polynomials gotten by making
$J$ central and identifying $str(ja)$ with $2qtr(a)$.
Note that when we make $J$ central we make supertraces
of even terms~0.
\begin{thm} In characteristic zero, the ideal of queer trace identities of $M_n(E)$,
equals the ideal of \jtr-identities of $M(n,n)$ together with
the identity that $J$ is central.\end{thm}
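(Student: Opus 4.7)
\medskip

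\noindent\textbf{Proof plan.} I would establish the theorem by proving the ideal equality in both directions, reading the statement as $Q\bigl(I(M(n,n),\jtr)\bigr)=I(M_n(E),qtr)$.

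For the inclusion $Q\bigl(I(M(n,n),\jtr)\bigr)\subseteq I(M_n(E),qtr)$, fix $f\in I(M(n,n),\jtr)$ and evaluate it at $a_1,\ldots,a_d\in SM(n,n)_0$, which under the $\widetilde{qtr}$-preserving isomorphism $SM(n,n)\cong M_n(E)$ range over arbitrary elements of $M_n(E)$. Each trace appearing in $f$ has the form $str(J^{\ep_0}a_{i_1}J^{\ep_1}\cdots J^{\ep_k})$; since every $a_i$ commutes with $J$ and $J^2=1$, the $J$'s coalesce into a single $J^{\ep}$ with $\ep\in\{0,1\}$. For $\ep=0$ the trace vanishes by the observation at the end of \S1.1 (an even element commuting with $j$ has zero supertrace), while for $\ep=1$ one has $str(Ja_{i_1}\cdots a_{i_k})=2\,\widetilde{qtr}(a_{i_1}\cdots a_{i_k})$, which equals $2\,qtr$ of the image in $M_n(E)$. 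Thus the value of $f$ in $M(n,n)$ matches the evaluation of $Q(f)$ in $M_n(E)$, forcing $Q(f)\in I(M_n(E),qtr)$.

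For the reverse inclusion, given $g\in I(M_n(E),qtr)$ I would form a natural lift $\tilde g\in\oF\langle X,\jtr\rangle$ by replacing each $qtr(u)$ with $\tfrac{1}{2}str(Ju)$, so that $Q(\tilde g)=g$. The goal is to show $\tilde g\in I(M(n,n),\jtr)+\ker Q$, equivalently that its image in $U(M(n,n),\jtr)/(J\text{ central})$ is zero. This quotient is itself a queer-trace algebra with $qtr(u):=\tfrac{1}{2}str(Ju)$: once $J$ is central, every even $a$ commutes with the central odd $J$, so $str(a)=str(J^2a)=-str(JaJ)=-str(a)$ and supertraces on even elements are killed. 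Moreover, trace-preserving maps from this quotient to $M(n,n)$ correspond exactly to substitutions $x_i\mapsto a_i\in SM(n,n)_0\cong M_n(E)$, so $\tilde g$ vanishes under every such substitution by hypothesis on $g$.

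The main obstacle is the final step: passing from ``$\tilde g$ vanishes on $SM(n,n)_0$'' to ``$\tilde g\equiv 0$ modulo $I(M(n,n),\jtr)+(J\text{ central})$,'' which amounts to showing that the pure \jtr\ ideal of $SM(n,n)_0$ coincides with $I(M(n,n),\jtr)+(J\text{ central})$. I would attack this by multilinearizing (using characteristic zero) and translating into the Sergeev-algebra framework of Section~2: multilinear pure \jtr-polynomials correspond to Sergeev algebra elements, the identities of $M(n,n)$ form the explicit ideal identified in Section~3, and imposing ``$J$ central'' realizes a concrete algebra-theoretic quotient on the Sergeev side. Invoking the double centralizer theorem of \cite{S85} together with the invariant theory of \cite{B13}, the quotient ideal should be identified degree by degree with the multilinear queer-trace identities of $M_n(E)$, closing the reverse inclusion and hence the proof.
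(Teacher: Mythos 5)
Your forward inclusion is fine and matches the paper's (terser) argument: evaluating a \jtr-identity on $S\mnn{}_0\cong M_n(E)$, the $J$'s coalesce, the even-in-$J$ supertraces die, and the odd ones become $2\,qtr$, so $Q$ carries $I(M(n,n),\jtr)$ into the queer trace identities. The problem is the reverse inclusion, where you explicitly flag ``the main obstacle'' --- passing from ``$\tilde g$ vanishes on $S\mnn{}_0$'' to ``$\tilde g$ lies in $I(M(n,n),\jtr)$ plus the ideal generated by $J$ being central'' --- and then only gesture at a strategy (quotienting the Sergeev algebra by a ``$J$ central'' relation and re-running the invariant theory of \cite{S85} and \cite{B13}). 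That step is the entire content of the theorem and is not carried out; as written, the proposal does not constitute a proof. It is also not clear the proposed machinery is the right tool: the difficulty is not about identifying multilinear identities of $M(n,n)$ (Section~3 does that), but about showing that an identity of the \emph{subalgebra} $S\mnn{}_0$ lifts to an identity of all of $M(n,n)_0$, which is a statement about the inclusion $S\mnn{}_0\subseteq M(n,n)_0$ rather than about the Sergeev ideal structure.

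The paper closes this gap with a short, elementary device you are missing: after multilinearizing, take any lift $f_1$ with $Q(f_1)=f$ (as you do), observe that $f_1$ vanishes under substitutions from $S\mnn{}_0$, and then set
$$g(x_1,\ldots,x_k)=f_1(x_1+Jx_1J,\ldots,x_k+Jx_kJ).$$
Since $a+jaj$ commutes with $j$ for every $a\in M(n,n)_0$, this substitution retracts $M(n,n)_0$ onto $S\mnn{}_0$, so $g$ is a genuine \jtr-identity of $M(n,n)$; and because $Q$ makes $J$ central, $Q(g)=f(2x_1,\ldots,2x_k)=2^kf$, exhibiting $f$ (in characteristic zero) in the image of the identity ideal. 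I'd recommend replacing your final paragraph with this averaging argument; the rest of your write-up can stand.
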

\begin{proof} Since $M_n(E)$ is isomorphic to $S\mnn$\ as an algebra
with queer trace, we consider queer trace identities of $S\mnn$.  Also,
since $S\mnn$\ is the centralizer of $j$ in $\mnn$, all supertrace of
identities of $\mnn$\ become queer trace identities of $S\mnn$\ under
the map~$Q$.  Since all identities are consequences of multilinear ones,
 in order to prove the theorem it suffices
to prove that if $f(x_1,\ldots,x_k)$ is a multilinear queer trace identity
for $S\mnn$  there exists a \jtr\ polynomial $g(x_1,\ldots,x_k)$
such that $Q(g)=f$.
First, let $f_1(x_1,\ldots,x_k)$ be any supertrace polynomial such that
$Q(f_1)=f$.  Then, $f_1$ will be a \jtr-identity for $S\mnn$, and so 
we let $$g(x_1,\ldots,x_k)=f_1(x_1+Jx_1J,\ldots,x_k+Jx_kJ).$$ For
each degree zero element~$a$ of $\mnn$ $a+jaj$ commutes with~$j$ and
so is a degree zero element of $S\mnn$.  Hence, $g$ is a \jtr-identity
for \mnn\ and $Q(g)=f(2x_1,\ldots,2x_k)=2^kf(x_1,\ldots,x_k)$ for
all $x_1,\ldots,x_k\in S\mnn$.
\end{proof}
\section{Properties of Multilinear $J$-Trace Identities}
\subsection{The Sergeev algebra}
Given a positive integer $d$ the Clifford algebra $C(d)$ is the $2^d$-dimensional algebra generated by the
anticommuting idempotents $c_1,\ldots,c_d$.  The symmetric group $S_d$ acts on
$C(d)$, and the Segeev algebra $W(d)$ is the semidirect product of $FS_d$ with $C(d)$.
More concretely, $S(d)$ is spanned by all $\sigma c_1^{\ep_1}\cdots c_d^{\ep_d}$  with
 relations $$c_i^2=1,\ c_ic_j=-c_jc_i,\ \sigma c_i=c_{\sigma(i)}\sigma.$$  In keeping with \cite{B13} we will be using the graded opposite of $W(d)$ instead
 of $W(d)$.  It is generated by permutations $\sigma$ and by Clifford elements $C_i$
 with relations 
 \begin{equation}C_i^2=1,\ C_iC_j=-C_jC_i, C_i\sigma=\sigma C_{\sigma(i)}, \sigma.\tau=\tau\sigma
 \label{eq:4}\end{equation}
 for $i,j=1,\ldots,d$ and $\sigma,\tau\in S_d$.
 In the last relation the dot is for the product in $W(d)^o$ and the juxtaposition
 $\tau\sigma$ indicates the ordinary product in $S_d$.
 
There is a well-known identification of elements of $FS_d$ with multilinear, degree~$d$
pure trace polynomials.  If $\sigma$ has cycle decomposition $$\sigma=(i_1,\ldots,i_a)\cdots
(j_1,\ldots,j_b)$$ then $tr_\sigma(x_1,\ldots,x_d)$ is defined to be
\begin{equation}tr(x_{i_1}\cdots x_{i_a})\cdots tr(x_{j_1}\cdots x_{j_b}).\label{eq:4.1}\end{equation}  Note that this is well-defined
because cycles commute and traces commute, and elements can be permuted cyclically
in both traces and cycles.  Another nice property of this 
identification is that
\begin{equation}tr_{\sigma\tau\sigma^{-1}}(x_1,\ldots,
x_d)=tr_\tau(x_{\sigma(1)},\ldots,x_{\sigma(d)})
\label{eq:5}\end{equation}  

Hypothetically, one would like to identify elements of the
Sergeev algebra with multilinear $J$-trace polynomials via
$$\sigma C_1^{\ep_1}\cdots C_d^{\ep_d}\rightarrow tr_\sigma(J^{\ep_1}x_1,\ldots,J^{\ep_d}
x_d).$$  Unfortunately, such an identification would not be well-defined.  Two pure
$J$-traces may commute or anticommute:  They will anticommute if they are each of odd
degree in~$J$.  Likewise, permuting the elements within a trace may result in a negative sign.  On the other hand, for given $\sigma c\in W(d)$ there are only two different $\jtr$
polynomials that could result from the above putative identification and they
are negatives of each other, and so we need to choose one in a natural way.

To identify elements of the Sergeev algebra with $\jtr$~polynomials, we first define $str_\sigma$ for
 permutations. Assume that some of the variables $x_1,
\ldots,x_d$ are even and the remainder odd, and let
$e_1,\ldots,e_d$ be supercentral variables with $\deg x_i=
\deg e_i$ for all~$i$, and such that $str(e_iu)=e_istr(u)$ for
all~$u$.  Then $str_\sigma(x_1,\ldots,x_d)$ is
uniquely determined by the condition
$$tr_\sigma(e_1x_1,\ldots,e_dx_d)=e_d\cdots e_1 str_\sigma
(x_1,\ldots,x_d).$$  If $w=\sigma C_1^{\ep_1}\ldots C_d^{\ep_d}
\in W(d)^o$ we define
$$jtr_w(x_1,\ldots,x_d)=str_\sigma(J^{\ep_1}x_1,\ldots, J^{\ep_d}
x_d),$$  recalling that the $x_i$ are all even and $J$ is odd.
Combining the previous two equations we get
\begin{equation}e_d\cdots e_1 jtr_w(x_1,\ldots,x_d)
=tr_\sigma(e_1 J^{\ep_1}x_1,\ldots,e_d J^{\ep_d}x_d),
\label{eq:j}\end{equation}
which can be taken as the definition of $jtr_w$ when $w$ is a monomial.
Finally, using linearity we can define
$jtr_w$ for any $w\in W(d)^o$.

More generally, if 
$w=\sigma C_{i_1}\cdots C_{i_k}$ with the $i_a$ distinct, then $jtr_w$ is gotten
from $tr_\sigma(x_1,\ldots,x_d)$ by substituting $Jx_{i_a}$
for each $x_{i_a}$ and pulling out a factor of $e_{i_k}\cdots e_{i_1}$ on the left, whether the sequence of $i_a$ is
increasing or not. 

\subsection{Conjugation and \jtr\ Polynomials}
The goal of this section is to generalize \eqref{eq:5} to $\jtr$
polynomials.  There are two generalizations, one deals with conjugation
by a permutation and the other deals with conjugation 
by a generator of the Clifford algebra.
\begin{lem} Let $w$  be an element of~$W(d)^o$ and let $\sigma\in S_d\subseteq
W(d)^o$ be
a permutation.  Then $jtr_{\sigma w\sigma^{-1}}(x_1,
\ldots,x_d)=jtr_w(x_{\sigma^{-1}(1)},\ldots,x_{\sigma^{-1}(d)}).$\label{lem:2}\end{lem}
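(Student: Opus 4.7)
My strategy is to unpack both sides of the claimed identity via the defining formula \eqref{eq:j} and connect them through the permutation identity \eqref{eq:5}. By linearity of the map $w\mapsto jtr_w$, it suffices to treat a standard monomial $w = \tau C_1^{\ep_1}\cdots C_d^{\ep_d}$.

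I would first compute $\sigma w \sigma^{-1}$ inside $W(d)^o$ using the relations \eqref{eq:4}. The relation $C_i\sigma = \sigma C_{\sigma(i)}$ rearranges to $\sigma C_i\sigma^{-1} = C_{\sigma^{-1}(i)}$, while the opposite-product convention $\sigma\cdot\tau = \tau\sigma$ gives $\sigma\tau\sigma^{-1} = \sigma^{-1}\tau\sigma$ (this last product computed in $S_d$). A direct calculation yields
$$\sigma w\sigma^{-1} = (\sigma^{-1}\tau\sigma)\, C_{\sigma^{-1}(1)}^{\ep_1} C_{\sigma^{-1}(2)}^{\ep_2}\cdots C_{\sigma^{-1}(d)}^{\ep_d},$$
in which the Clifford indices are distinct but no longer in increasing order. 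This is precisely the situation handled by the extension of \eqref{eq:j} described at the end of Section~2.1.

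Next, I would apply \eqref{eq:j} to expand the right-hand side of the lemma:
$$e_d\cdots e_1\, jtr_w(x_{\sigma^{-1}(1)},\ldots,x_{\sigma^{-1}(d)}) = tr_\tau(e_1 J^{\ep_1} x_{\sigma^{-1}(1)},\ldots, e_d J^{\ep_d} x_{\sigma^{-1}(d)}).$$
Substituting $\sigma^{-1}$ for $\sigma$ in \eqref{eq:5} and using it in reverse converts the right-hand side into $tr_{\sigma^{-1}\tau\sigma}(e_{\sigma(1)} J^{\ep_{\sigma(1)}} x_1,\ldots, e_{\sigma(d)} J^{\ep_{\sigma(d)}} x_d)$. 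Re-reading this expression through \eqref{eq:j} identifies it with $e_{\sigma(d)}\cdots e_{\sigma(1)}\, jtr_{w'}(x_1,\ldots, x_d)$, where $w' = (\sigma^{-1}\tau\sigma)\, C_1^{\ep_{\sigma(1)}}\cdots C_d^{\ep_{\sigma(d)}}$ is $\sigma w\sigma^{-1}$ with Cliffords placed in standard order.

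\textbf{Main obstacle: sign bookkeeping.} Rearranging the product $e_d\cdots e_1$ into $e_{\sigma(d)}\cdots e_{\sigma(1)}$ introduces a sign, and moving the Clifford factors of $\sigma w\sigma^{-1}$ into standard order introduces another. Since each $e_i$ has the same parity $\ep_i$ as the Clifford in slot $i$, both signs are governed by the same permutation of the odd-index set $\{i : \ep_i = 1\}$, so they coincide. Cancelling the common factor $e_d\cdots e_1$ then delivers the lemma.
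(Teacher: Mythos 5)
Your proposal is correct and follows essentially the same route as the paper: reduce to a monomial, conjugate using the relations \eqref{eq:4}, transfer through \eqref{eq:5}, and reconcile the sign from reordering the Clifford factors with the sign from reordering the Grassmann coefficients. The one place the paper works harder is exactly your ``main obstacle'': the two reorderings are governed by mutually inverse pattern permutations on the odd-index set, with one of the products written in reversed order, and the paper isolates this coincidence of signs as a sublemma (proved via the reversal anti-automorphism of $E$) rather than asserting it in a single line.
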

As the proof will show, there is a $\sigma^{-1}$ instead of a $\sigma$
in the formula unlike \eqref{eq:5} because we are using the opposite of
$S_d$.
\begin{proof} Without loss, we may take $w=\tau
 C_1^{\ep_1}\cdots C_d^{\ep_d}=\tau c$ and so by~\eqref{eq:4}
$$\sigma w\sigma^{-1}=\sigma.\tau.\sigma^{-1}C_{\sigma^{-1}(1)}^{\ep_1}
\cdots C_{\sigma^{-1}(d)}^{\ep_d}=\sigma^{-1}\tau\sigma C_{\sigma^{-1}(1)}^{\ep_1}
\cdots C_{\sigma^{-1}(d)}^{\ep_d}.$$  Then 
$$ C_{\sigma^{-1}(1)}^{\ep_1}
\cdots C_{\sigma^{-1}(d)}^{\ep_d} =
gC_1^{\ep_{\sigma(1)}}\cdots C_d^{\ep_{\sigma(d)}},$$ where
$g=\pm1$. Let $e_i$ be 
supercentral with degrees equal to $\ep_{\sigma(i)}$.  Then
$$ e_d\ldots e_1 jtr_{\sigma w\sigma^{-1}}(x_1,\ldots,x_d)
=g\cdot tr_{\sigma^{-1}\tau\sigma}(e_1J^{\ep_{\sigma(1)}}x_1,\ldots,
e_dJ^{\ep_{\sigma(d)}}x_d).$$
By \eqref{eq:5} this equals
\begin{align*}g\cdot tr_\tau(e_{\sigma^{-1}(1)}&J^{\ep_1}x_{\sigma^{-1}(1)},
\ldots,e_{\sigma^{-1}(d)}J^{\ep_d}x_{\sigma^{-1}(d)})\\ &=
g\cdot e_{\sigma^{-1}(d)}\ldots e_{\sigma^{-1}(1)}jtr_{wc}(x_{\sigma^{-1}(1)},\ldots,
x_{\sigma^{-1}(d)})\\
&=\pm e_d\ldots e_1 jtr_{wc}(x_{\sigma^{-1}(1)},\ldots,
x_{\sigma^{-1}(d)}).
\end{align*}
To complete the proof we need to check that the sign is positive.  Since the
degree of $e_i$ equals the degree of $\ep_{\sigma(i)}$, the definition
of~$g$ implies that $$ge_1\ldots e_d=e_{\sigma^{-1}(1)}\ldots e_{\sigma^{-1}(d)}.$$  
This is not exactly what we want because the indices are increasing rather
than decreasing.  We separate the last step of the proof into a sublemma.
\begin{subl*} Let $e_1,\ldots,e_d$ be homogeneous elements of the Grassmann
algebra~$E$ and let $\sigma\in S_d$ be a permutation such that
$ge_1\ldots e_d=e_{\sigma^{-1}(1)}\ldots e_{\sigma^{-1}(d)}.$  Then
$ge_d\ldots e_1=e_{\sigma^{-1}(d)}\ldots e_{\sigma^{-1}(1)}.$\end{subl*}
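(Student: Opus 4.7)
The plan is to exploit the fact that, in a supercommutative algebra, reversing a product of homogeneous elements produces a sign that depends only on the multiset of degrees, not on the order in which the elements appear.

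First I would establish the reversal sign formula: for any homogeneous $a_1,\ldots,a_d$ in a supercommutative algebra, one has
$$a_d a_{d-1}\cdots a_1 = (-1)^{\binom{k}{2}} a_1 a_2\cdots a_d,$$
where $k$ is the number of indices $i$ with $a_i$ of odd degree. This is immediate by expressing the reversal as $\binom{d}{2}$ adjacent transpositions, each contributing $(-1)^{\deg a_i \deg a_j}$, and observing that $\sum_{i<j}\deg a_i\deg a_j$ counts precisely the pairs of odd elements, i.e.\ $\binom{k}{2}$.

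Next I would apply this formula to both sides of the given equation. Set $\pi=\sigma^{-1}$, so the hypothesis reads $g\,e_1\cdots e_d = e_{\pi(1)}\cdots e_{\pi(d)}$. The number $k$ of odd elements in the sequence $e_1,\ldots,e_d$ equals the number of odd elements in any permutation of that sequence, in particular in $e_{\pi(1)},\ldots,e_{\pi(d)}$. Hence
$$e_d\cdots e_1 = (-1)^{\binom{k}{2}} e_1\cdots e_d, \qquad e_{\pi(d)}\cdots e_{\pi(1)} = (-1)^{\binom{k}{2}} e_{\pi(1)}\cdots e_{\pi(d)}.$$
Multiplying the hypothesis $g\,e_1\cdots e_d = e_{\pi(1)}\cdots e_{\pi(d)}$ by $(-1)^{\binom{k}{2}}$ and substituting gives $g\,e_d\cdots e_1 = e_{\pi(d)}\cdots e_{\pi(1)}$, which is the desired conclusion.

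I do not expect a real obstacle here: the only subtlety is that the reversal sign is intrinsic to the multiset of degrees, so it is the same on both sides and cancels. One could phrase this more conceptually by noting that $E$ is $\bbz_2$-graded supercommutative, so both $e_1\cdots e_d$ and $e_{\pi(1)}\cdots e_{\pi(d)}$ lie in the same homogeneous component, and the anti-automorphism ``reversal'' acts on that component by the scalar $(-1)^{\binom{k}{2}}$, which commutes with the scalar $g$.
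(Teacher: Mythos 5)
Your proof is correct, but it takes a different route from the paper. You compute the reversal sign explicitly: in a supercommutative algebra, reversing a product of homogeneous elements multiplies it by $(-1)^{\binom{k}{2}}$, where $k$ is the number of odd factors, and since $k$ is an invariant of the multiset of degrees it is the same for $e_1\cdots e_d$ and for $e_{\sigma^{-1}(1)}\cdots e_{\sigma^{-1}(d)}$; the two signs cancel against each other. The paper instead invokes the reversal anti-automorphism $\theta$ of $E$ (defined on monomials in the generators $v_j$ by $v_{i_1}\cdots v_{i_t}\mapsto v_{i_t}\cdots v_{i_1}$), reduces to the case where each $e_i$ is a monomial so that $\theta(e_i)=g_ie_i$, and uses the fact that the scalar $g_1\cdots g_d$ is unchanged under permutation of the factors together with injectivity of $\theta$. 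The two arguments are close cousins --- your $(-1)^{\binom{k}{2}}$ is exactly the scalar by which $\theta$ acts on the relevant homogeneous component --- but your version is slightly more self-contained: it never needs the reduction to monomials in the $v_j$, and it makes the key invariance (the sign depends only on the degree sequence, not the order) completely explicit rather than hiding it in the identity $g_1\cdots g_d=g_{\sigma^{-1}(1)}\cdots g_{\sigma^{-1}(d)}$. The paper's version buys a computation-free argument at the cost of introducing $\theta$ and the normalization constants $g_i$. Both are complete proofs of the sublemma.
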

\begin{proof}[Proof of Sublemma:] Let $E$ be the Grassmann algebra on the
vector space $V$ with basis $v_1,v_2,\ldots$ and assume without loss of generality
that each $e_i$ is a monomial in the $v_j$.  There is a  antiisomorphism
$\theta$ on $E$ given by $$v_{i_1}\cdots v_{i_t}\mapsto v_{i_t}\cdots v_{i_1}.$$
For each monomial $e_i$ we have $\theta(e_i)=g_ie_i$ where $g_i=\pm1$.
Now,
\begin{align*}
\theta(e_d\cdots e_1)&=\theta(e_1)\cdots\theta(e_d)\\
&=g_1\cdots g_d e_1\cdots e_d\\
&=gg_1\cdots g_d e_{\sigma^{-1}(1)}\ldots e_{\sigma^{-1}(d)}\\
&=g\theta(e_{\sigma^{-1}(1)})\ldots \theta(e_{\sigma^{-1}(d)})\\
&=g\theta(e_{\sigma^{-1}(d)}\ldots e_{\sigma^{-1}(1)}).
\end{align*}
Since $\theta$ is a one-to-one linear transformation the sublemma, and hence
the lemma, follow.
\end{proof}\noqed
\end{proof}
\begin{lem} Let $w=\sg c =\sg C_1^{\ep_1}\cdots C_d^{\ep_d}\in W(d)^o$ be a monomial.  Then $$jtr_{C_iwC_i}(x_1,\ldots,
x_d)=\pm jtr_w(x_1,\ldots,Jx_iJ,\ldots,x_d),$$ where the sign is $(-1)^{\sum \ep_j}$.\label{lem:3}\end{lem}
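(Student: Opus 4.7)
The plan is to compute both sides directly from definition~\eqref{eq:j}, using the defining relations of $W(d)^o$ for the left-hand side and supercyclicity of $str$ together with the formal identity $J^2=1$ for the right-hand side.  First I would put $C_iwC_i$ into standard monomial form: writing $w=\sigma C_1^{\ep_1}\cdots C_d^{\ep_d}$, the relation $C_i\sigma=\sigma C_{\sigma(i)}$ of~\eqref{eq:4} gives $C_iwC_i=\sigma C_{\sigma(i)}C_1^{\ep_1}\cdots C_d^{\ep_d}C_i$, and shuttling $C_{\sigma(i)}$ from the left and $C_i$ from the right into their correct positions via $C_aC_b=-C_bC_a$ and $C_i^2=1$ produces $C_iwC_i=(-1)^{\alpha}\sigma C_1^{\delta_1}\cdots C_d^{\delta_d}$, where $\delta_j=\ep_j$ for $j\notin\{i,\sigma(i)\}$ while $\delta_i\equiv\ep_i+1$ and $\delta_{\sigma(i)}\equiv\ep_{\sigma(i)}+1\pmod 2$ (the two flips cancel harmlessly when $\sigma(i)=i$), and $\alpha$ is a specific expression in the $\ep_j$'s, $i$ and $\sigma(i)$ that I would compute by counting signed Clifford swaps.

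With $C_iwC_i$ in standard form, applying~\eqref{eq:j} gives $jtr_{C_iwC_i}=(-1)^\alpha\,str_\sigma(J^{\delta_1}x_1,\ldots,J^{\delta_d}x_d)$.  For the right-hand side, substituting $x_i\mapsto Jx_iJ$ into $jtr_w=str_\sigma(J^{\ep_1}x_1,\ldots,J^{\ep_d}x_d)$ produces a factor $J^{\ep_i}\cdot Jx_iJ=J^{\ep_i+1}x_iJ$ with a trailing $J$ that must be migrated.  Let $(i_1,i_2,\ldots,i_s)$ be the cycle of $\sigma$ through $i$, with $i_1=i$ and $i_2=\sigma(i)$.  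Inside the trace along this cycle, the trailing $J$ sits adjacent to $e_{i_2}J^{\ep_{i_2}}x_{i_2}$; supercommuting $J$ past the supercentral $e_{i_2}$ of degree $\ep_{i_2}$ introduces the sign $(-1)^{\ep_{i_2}}$ and fuses $J\cdot J^{\ep_{i_2}}$ into $J^{\ep_{i_2}+1}$, producing exactly the $\delta_j$-pattern of the left-hand side.  When $s=1$, i.e.\ $\sigma(i)=i$, the same effect is achieved by supercyclicity of the single trace $tr(e_iJ^{\ep_i+1}x_iJ)$: the trailing $J$ wraps around, contributing a sign and reducing $J^{\ep_i+2}=J^{\ep_i}$, consistent with the cancellation of the two $\delta$-flips in the Sergeev-algebra calculation.

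The final step is to collect all signs.  The left-hand side carries $(-1)^\alpha$ from the Sergeev rearrangement, while the right-hand side picks up $(-1)^{\ep_{\sigma(i)}}$ from the $J$-migration.  Additionally, $str_\sigma$ is applied to arguments of different degree profile on the two sides ($\delta_j$ versus $\ep_j$), so the corresponding supercentral dummies $e'_j$ and $e_j$ also differ in degree, and normalizing the products $e_d\cdots e_1$ and $e'_d\cdots e'_1$ produces further sign contributions; this reordering is handled in the manner of the Grassmann-antiautomorphism sublemma used inside the proof of Lemma~\ref{lem:2}.  The main obstacle is precisely this sign bookkeeping: four independent sources of sign (Clifford rearrangement, $J$-migration, supercommutation of the dummies, and reversal of the ordered product) must be tracked in parallel, and the work is in verifying that they collapse to the single global sign $(-1)^{\sum_j\ep_j}$ asserted in the lemma.
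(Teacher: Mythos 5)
Your outline follows exactly the paper's route: normalize $C_iwC_i$ to standard monomial form $(-1)^\alpha\sg C_1^{\delta_1}\cdots C_d^{\delta_d}$ with the exponents at positions $i$ and $\sg(i)$ flipped, then realize the substitution $x_i\mapsto Jx_iJ$ inside $tr_\sg(e_1J^{\ep_1}x_1,\ldots,e_dJ^{\ep_d}x_d)$ by migrating the extra $J$ along the cycle of $\sg$ through $i$ (the paper runs the same splitting identity in the opposite direction, pulling a $J$ out of slot $\sg(i)$ and appending it after slot $i$, but that is the same mechanism). You have also correctly identified every source of sign: the Clifford reordering, the supercommutation of $J$ past a degree-$\ep_{\sg(i)}$ dummy, and the mismatch between the dummies $e_j$ and $e_j'$ needed for the two degree profiles.

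The gap is that the lemma \emph{is} the sign, and you never compute it. You introduce $\alpha$ but leave it as ``a specific expression \dots\ that I would compute,'' and you close by saying the work is in verifying that the four contributions collapse to $(-1)^{\sum\ep_j}$ --- which is precisely the assertion to be proved, not a step you may defer. Two of your four sources are also described too vaguely to be checked: the dummy-degree mismatch is not resolved by the order-reversal sublemma of Lemma~\ref{lem:2} (which reverses a fixed product) but by an explicit refactoring --- the paper writes $e_{\sg(i)}=f_1f_2$ with $\deg f_1=1$, sends $f_1$ along with the migrating $J$ so that slot $i$ acquires the coefficient $f_1e_i$ of degree $\ep_i$, and then pays a sign $(-1)^{\ep_{\sg(i)+1}+\cdots+\ep_i}$ to return $f_1$ to its place in the product $e_d\cdots e_1$. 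With that done (in the case $\sg(i)<i$) the three exponents are $\gamma_1=\ep_1+\cdots+\ep_{\sg(i)-1}+\ep_{i+1}+\cdots+\ep_d$ from the Clifford rearrangement, $\gamma_2=\ep_{\sg(i)}$ from the $J$-migration, and $\gamma_3=\ep_{\sg(i)+1}+\cdots+\ep_i$ from restoring the dummy product, and only then does one see that $\gamma_1+\gamma_2+\gamma_3=\sum_j\ep_j$. Without this (or an equivalent) explicit bookkeeping, and without at least noting how the cases $\sg(i)=i$ and $\sg(i)>i$ differ, the argument establishes only the statement with an undetermined sign $\pm$, which is strictly weaker than the lemma.
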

\begin{proof}   There are  three slightly different cases
depending on whether $\sigma(i)$ is less than, equal to, or
greater than~$i$.  Trusting the reader to do the other two
 cases, we assume that $\sigma(i)<i$.  In this case
$$C_i\sigma cC_i=\sigma C_{\sigma(i)}c C_i=g_1\sigma c',$$
where
$$c'=C_1^{\ep_1}\cdots C_{\sigma(i)}^{\ep_{1+\sigma(i)}}\cdots
C_i^{1+\ep_i}\cdots C_d^{\ep_d}$$ and where
$g_1=(-1)^{\gamma_1}$ and $$\gamma_1=\ep_1+\cdots+\ep_{\sg(i)-1}+\ep_{i+1}+\cdots
\ep_d.$$  To compute $jtr_{\sigma
c'}$ we choose $e_j$ with degrees $\ep_j$, unless $j=i$ or~$\sigma(i)$ in which case the degree is one greater.  Now
\begin{equation}e_d\cdots e_1 jtr_{\sigma c'}(x_1,\ldots,x_d)=
tr_\sigma(e_1J^{\ep_1}x_1,\ldots,e_dJ^{\ep_d}x_d).
\label{eq:6.5}\end{equation}
The $i$-th term is $C_iJ^{1+\ep_i}x_i$ and the $\sigma(i)$ term is 
$e_{\sigma(i)}J^{1+\ep_{\sigma(i)}}x_{\sigma(i)}$.  We can 
assume without loss of generality that $e_{\sigma(i)}$ factors as $f_1f_2$, where $f_1$
is of degree~1 and $f_2$ is of degree~$\ep_{\sigma(i)}$.  An
important but elementary fact about trace monomials is 
that if $y_{\sigma(i)}=ab$ then
$tr_\sg(y_1,\ldots,y_d)=tr_\sg(y_1,\ldots,y_ia,\ldots,b,\ldots
y_d),$ where in the right hand side of the equation $y_i$
is replaced by $y_ia$ and $y_{\sg(i)}$ is replaced by~$b$.
In order to apply this to \eqref{eq:6.5} we first note
that $y_{\sg(i)}=f_1f_2J^{1+\ep_{\sg(i)}}x_{\sg(i)}$ which we re-write
as $(-1)^{\ep_{\sigma(i)}}f_1Jf_2J^{\ep_{\sg(i)}}x_{\sg(i)}$.
For the sake of bookkeeping, we let $g_2=(-1)^{\gamma_2}$
where $\gamma_2=\ep_{\sg(i)}$.  Then~\eqref{eq:6.5}
equals
$$g_2\cdot tr_\sg(e_1J^{\ep_1}x_1,\ldots,f_2J^{\ep_{\sg(i)}}x_{\sg(i)},\ldots,e_iJ^{1+\ep_i}x_i
f_1J,\ldots).$$
Since $f_1$ is degree one and supercentral, the $i^{th}$ argument equals $f_1e_iJ^{\ep_i}Jx_iJ$, and 
since $f_1e_i$ has degree $\ep_i$ this equals
$$g_2\cdot e_d\cdots e_{i+1} f_1e_i e_{i-1}\cdots
 e_{\sg(i)+1} f_2e_{\sg(i)-1}\cdots  e_1 jtr_w(x_1,
\ldots,Jx_iJ,\ldots,x_n)$$ and to complete the proof
we need only pull the $f_1$ back to be in front of the
$f_2$ and compute the sign.  First, 
$$e_d\cdots f_1\cdots f_2e_i\cdots e_1=g_3\cdot
e_d\cdots e_1$$ where $g_3=(-1)^{\gamma_3}$ and
$\gamma_3=\ep_{\sg(i)+1}\cdots+\ep_{i}$.  Altogether
the sign is $g_1g_2g_3=(-1)^{\gamma_1+\gamma_2+
\gamma_3}$ and $\gamma_1+\gamma_2+\gamma_3=
\ep_1+\cdots+\ep_d$,which proves the lemma.
\end{proof}
\subsection{Multilinear Identities}
Given a monomial $c=C_1^{\ep_1}\cdots C_d^{\ep_d}$
in $C(d)^o$ we define $|c|$ to be $\sum \ep_j$ in $\bbz_2$,
and we sometimes refer to $c$ as even or odd depending
on whether $|c|$ is~0 or~1.  Likewise, for 
$\sg c\in W(d)^o$ we define $|\sg c|$ to be $|c|$.
\begin{lem} Let $A$ be an algebra with a \jtr.  Assume that $$u=\sum_{\sigma,\ep} \alpha(\sigma,\ep)\sigma c^\ep\in W(d)$$
is such that $jtr_u$ identity of~$A$, and write $u$ as $u=u_0
+u_1$, the even terms plus the odd terms.  Then $jtr_{u_0}$
and $jtr_{u_1}$ are each identities for~$A$ .\label{lem:4}
\end{lem}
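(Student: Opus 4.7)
The plan is to exploit the $\mathbb{Z}_2$-grading. For a monomial $w=\sigma C_1^{\ep_1}\cdots C_d^{\ep_d}\in W(d)^o$, I want to show that the associated polynomial $jtr_w(x_1,\ldots,x_d)$, when evaluated on degree-zero elements $a_1,\ldots,a_d\in A_0$, always lands in the degree-$|w|$ part of the supercommutative target of the supertrace. Once that is established, the desired separation of $jtr_{u_0}$ and $jtr_{u_1}$ is just the uniqueness of the even/odd decomposition in a $\mathbb{Z}_2$-graded algebra.

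For the homogeneity claim, recall from the definition that
\[
jtr_w(x_1,\ldots,x_d) = str_\sigma(J^{\ep_1}x_1,\ldots,J^{\ep_d}x_d),
\]
which is a product of supertraces, one for each cycle of~$\sigma$, whose arguments are the words $J^{\ep_{i_1}}x_{i_1}\cdots J^{\ep_{i_a}}x_{i_a}$. Substituting $x_i\mapsto a_i\in A_0$, each such word has parity $\ep_{i_1}+\cdots+\ep_{i_a}\pmod 2$ (since $J$ is odd and each $a_i$ even). Because the supertrace is homogeneous, its value has the same parity as its argument, and multiplying the supertrace values from the various cycles yields an element of parity $\sum_{j=1}^d \ep_j = |w|\in\bbz_2$. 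Therefore $jtr_{\sigma c^\ep}(a_1,\ldots,a_d)$ is always an even element when $|c^\ep|=0$ and always an odd element when $|c^\ep|=1$.

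Now write $u=u_0+u_1$ as in the statement. For any substitution $x_i\mapsto a_i\in A_0$ the linearity of the $jtr$ construction gives
\[
jtr_u(a_1,\ldots,a_d)=jtr_{u_0}(a_1,\ldots,a_d)+jtr_{u_1}(a_1,\ldots,a_d),
\]
with the first summand in the degree-0 part of the supercommutative target and the second in the degree-1 part. Since $jtr_u$ is assumed to be an identity, the left-hand side is zero for every choice of $a_1,\ldots,a_d$. But the only way a sum of an even and an odd element can vanish in a $\bbz_2$-graded algebra is for each piece to vanish separately. Hence both $jtr_{u_0}$ and $jtr_{u_1}$ vanish on every substitution, which is what we needed.

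There is essentially no obstacle here beyond bookkeeping: one only has to be comfortable that the formula defining $jtr_w$ through the auxiliary supercentral variables $e_1,\ldots,e_d$ does not affect the parity of the evaluation, since those $e_i$ are introduced purely to fix signs in the definition and disappear once one uses the direct formula $jtr_w=str_\sigma(J^{\ep_1}x_1,\ldots,J^{\ep_d}x_d)$ to read off the degree.
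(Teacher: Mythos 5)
Your proof is correct and follows essentially the same route as the paper: evaluate on arbitrary $a_1,\ldots,a_d\in A_0$, observe that $jtr_{u_0}$ contributes the even part and $jtr_{u_1}$ the odd part of the value, and conclude each must vanish separately. Your version just spells out the parity bookkeeping (that $jtr_{\sigma c^{\ep}}(a_1,\ldots,a_d)$ is homogeneous of degree $|c^{\ep}|$) which the paper states without elaboration.
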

\begin{proof} Given any $a_1,\ldots,a_d\in A_0$, $$jtr_u(a_1,\ldots,a_d)=jtr_{u_0}(a_1,\ldots,a_d)
+jtr_{u_1}(a_1,\ldots,a_d)=0.$$ But since the first term is in ~$A_0$ and the second in~$A_1$
each must be zero.\end{proof}
Combining this lemma with lemmas~\ref{lem:2} and ~\ref{lem:3} of the previous
section we get this theorem.
\begin{thm} Let $A$ be a \jtr\ algebra and let $jtr_f$ be an
identity for $A$ for some $f\in W(d)^o$.  Let $w=\sg c$ be
a monomial in $W(d)^o$.  Then $jtr_{wfw^{-1}}$ is also an
identity for~$A$.\label{cor:1}\qed
\end{thm}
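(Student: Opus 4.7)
The plan is to reduce conjugation by a general monomial $w = \sigma c$ to its two building blocks (permutation and Clifford generators) and apply Lemmas~\ref{lem:2} and \ref{lem:3}, using Lemma~\ref{lem:4} to handle the sign that appears in Lemma~\ref{lem:3}.

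First, note that $w$ is invertible in $W(d)^o$: $\sigma$ is a group element of $S_d$ and $c$ is a product of $C_i$'s, each satisfying $C_i^2 = 1$. In the opposite algebra one has $w^{-1} = c^{-1}\sigma^{-1}$, so
\[
wfw^{-1} = \sigma\bigl(cfc^{-1}\bigr)\sigma^{-1}.
\]
Moreover, writing $c$ as a product of individual generators $C_{i_1}\cdots C_{i_k}$ (up to a sign), conjugation by $c$ factors as iterated conjugation by single $C_{i_a}$'s. Thus it suffices to establish the theorem in two cases: $w = \sigma \in S_d$, and $w = C_i$.

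For the permutation case, apply Lemma~\ref{lem:2} monomial-by-monomial and extend linearly to obtain
\[
jtr_{\sigma f\sigma^{-1}}(x_1,\ldots,x_d) = jtr_f(x_{\sigma^{-1}(1)},\ldots,x_{\sigma^{-1}(d)}).
\]
If $jtr_f$ vanishes on every $d$-tuple from $A_0$, then so does the right-hand side, since permuting an arbitrary $d$-tuple from $A_0$ produces another such tuple.

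For the Clifford case $w = C_i$, the subtlety is that the sign $(-1)^{|w|}$ in Lemma~\ref{lem:3} depends on the parity of each monomial, so we cannot apply that lemma directly by linearity to a non-homogeneous $f$. This is precisely what Lemma~\ref{lem:4} is for: decompose $f = f_0 + f_1$ into its even and odd parts, each of which gives an identity. For homogeneous $f_k$ the sign is constant across all monomials, so Lemma~\ref{lem:3} extends linearly to
\[
jtr_{C_i f_k C_i}(x_1,\ldots,x_d) = (-1)^{|f_k|}\, jtr_{f_k}(x_1,\ldots,Jx_iJ,\ldots,x_d).
\]
Since the opening discussion shows that $j a j \in A_0$ whenever $a \in A_0$ (as $A_0 = jA_1 = jA_0 j$), substituting $Jx_iJ$ for $x_i$ preserves the property of being an identity. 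Finally, observe that $|C_i f_k C_i| = |f_k|$, so parity is preserved and one may iterate this argument over the factors of $c$ to conclude that $jtr_{cfc^{-1}}$ is an identity; composing with the permutation case finishes the proof. The main technical obstacle is thus the bookkeeping of signs in Lemma~\ref{lem:3}, which is cleanly resolved by the parity decomposition of Lemma~\ref{lem:4}.
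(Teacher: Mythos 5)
Your proof is correct and takes exactly the route the paper intends: the paper gives no written argument beyond the remark that the theorem follows by combining Lemma~\ref{lem:4} with Lemmas~\ref{lem:2} and~\ref{lem:3}, and your proposal supplies precisely that combination. You also correctly isolate the one genuine subtlety --- the parity-dependent sign in Lemma~\ref{lem:3} --- and resolve it with the even/odd decomposition of Lemma~\ref{lem:4}, which is evidently why that lemma appears in the paper immediately before this theorem.
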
\qed

The correspondence $w\leftrightarrow jtr_w$ is a vector space isomorphism between
$W(d)^o$ and the degree~$d$, multilinear, pure \jtr\ polynomials in $x_1,\ldots,x_d$, which
we denote $JTR_d$.  Using this identification we may consider $JTR_d$ as a bimodule for
$W(d)^o$.  Also note that the incusions $W(d)^o\subseteq W(e)^o$ for $d\le e$
correspond to the inclusions $JTR_d\subseteq JTR_{d+1}$ gotten by correspnding 
$f(\xd)$ to $f(\xd)str(x_{d+1})\cdots str(x_e)$.  For a fixed \jtr-algebra~$A$ we let $I_d\subseteq JTR_d$ be the set of identities
for~$A$ in~$JTR_d$.  
\begin{conv*}Theorem~\ref{cor:1} shows that $I_d$ is invariant under conjugation
by monomials in $W(d)^o$ but in general it need not be invariant under right or left
multiplication.  For the remainder of  this section we will consider the special case in which $I_d$ is both
a left and right $W(d)^o$ module and we will prove that the elements of  $W(e) I_d W(e)$
are all consequences of $I_d$ and hence are in $I_e$, 
for all $d\le e$.\end{conv*}
  We remark that the corresponding statement is a key lemma in the derivations
of the trace identities for matrices and for $M_{k,\ell}$ by Razmyslov, Procesi and Berele mentioned
previously.

First of all, by induction, it suffices to prove the statement for $e=d+1$.  Next, it suffices
to prove that if $f\in I_d$ then $fw,\ wf\in I_{d+1}$ for each monomial $w\in W(d+1)^o$,
and by Theorem~\ref{cor:1} we need only consider the case of $wf$.  

\begin{lem}Let $\sigma\in S_{d+1}$ fix $d+1$. Then $tr_{\sigma(i,d+1)}(x_1,\ldots,x_{d+1})
=tr_\sigma(x_1,\ldots,x_{d+1}x_i,\ldots,x_d)$.\label{lem:5}
\end{lem}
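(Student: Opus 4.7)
The plan is to compare cycle decompositions on the two sides and then invoke cyclic invariance of the ordinary trace. Since $\sigma$ fixes $d+1$, its cycle decomposition consists of the singleton cycle $(d+1)$, the cycle $C=(i,\sigma(i),\ldots,\sigma^{k-1}(i))$ through $i$ (where $\sigma^k(i)=i$), and possibly some further cycles disjoint from $\{i,d+1\}$. The first step is to determine how right multiplication by the transposition $(i,d+1)$ changes this.

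All cycles disjoint from $\{i,d+1\}$ are unaffected. The two cycles $(d+1)$ and $C$ merge: by direct inspection, in $\sigma(i,d+1)$ the element $\sigma^{k-1}(i)$, which under $\sigma$ would map to $i$, now maps to $d+1$, while $d+1$ in turn maps to $i$, and each $\sigma^{j}(i)$ with $0\le j<k-1$ still maps to $\sigma^{j+1}(i)$. So the two cycles combine into the single cycle $(i,\sigma(i),\ldots,\sigma^{k-1}(i),d+1)$ of length $k+1$.

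With the cycle structure in hand, I would apply the definition \eqref{eq:4.1} to each side. All factors coming from cycles disjoint from $\{i,d+1\}$ agree trivially. The new length-$(k+1)$ cycle contributes the factor
$$tr(x_i x_{\sigma(i)}\cdots x_{\sigma^{k-1}(i)} x_{d+1})$$
to $tr_{\sigma(i,d+1)}(x_1,\ldots,x_{d+1})$, while on the right-hand side the cycle $C$, with $x_i$ replaced by $x_{d+1}x_i$ and no $x_{d+1}$ elsewhere, contributes
$$tr(x_{d+1} x_i x_{\sigma(i)}\cdots x_{\sigma^{k-1}(i)}).$$
These two expressions agree by cyclicity of trace (move $x_{d+1}$ from the front to the back), which completes the argument.

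There is essentially no obstacle here; the only point that requires care is the convention for composing permutations, so as to be sure the merged cycle reads in the claimed order rather than in some rotated or reversed order. Cyclic invariance of trace absorbs any such rotation, so the proof is robust to convention choices.
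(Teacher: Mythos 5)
Your approach is the same as the paper's: the paper's entire proof is the computation $(i,a,\ldots,b)(i,d+1)=(i,d+1,a,\ldots,b)$ followed by an appeal to \eqref{eq:4.1}, which is exactly your cycle-merging argument written out in more detail, so the substance is fine. One correction, though: your closing claim that cyclic invariance of the trace makes the argument robust to the choice of composition convention is false, and it conceals the only genuinely delicate point in the lemma. Under the convention in which the right-hand factor acts first, the merged cycle is $(i,d+1,\sigma(i),\ldots,\sigma^{k-1}(i))$, i.e.\ $d+1$ is inserted immediately \emph{after} $i$, and the resulting trace factor is $tr(x_ix_{d+1}x_{\sigma(i)}\cdots x_{\sigma^{k-1}(i)})$, corresponding to the substitution $x_i\mapsto x_ix_{d+1}$ rather than $x_i\mapsto x_{d+1}x_i$; for $k\ge2$ these two cyclic words are not rotations of one another, so cyclicity cannot repair the discrepancy. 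The convention therefore decides on which side $x_{d+1}$ multiplies. Your computation uses the left-to-right convention and so proves the lemma exactly as stated (with $x_{d+1}x_i$); the paper's own displayed product uses the other convention and actually yields the $x_ix_{d+1}$ version, which is the form in which the lemma is invoked in the proof of Lemma~\ref{lem:2.5}. Either version serves the paper's purposes, but you must commit to one convention and track which side $x_{d+1}$ lands on rather than appealing to robustness.
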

\begin{proof}  The proof follows from the computation
$$(i,a,\ldots,b)(i,d+1)=(i,d+1,a,\ldots,b)$$
together with equation \eqref{eq:4.1}.\end{proof}

\begin{lem}
Let $f=jtr_w$ for some $w\in I_d$.  Then for each $u\in
W(d+1)^o$, $uf\equiv jtr_{uw}$ is a consequence of~$I_n$.\label{lem:2.5}
\end{lem}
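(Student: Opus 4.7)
By linearity in $u$, we may assume $u=\sigma C_1^{\ep_1}\cdots C_{d+1}^{\ep_{d+1}}$ is a monomial. The plan is to factor $u$, up to sign, as $u=\pm u_0u_1$ where $u_0\in W(d)^o$ and the ``extra'' factor $u_1$ belongs to the short list $\{1,\ C_{d+1},\ (j,d+1),\ (j,d+1)C_{d+1}\}$ for some $j\le d$ determined by $\sigma$ and the exponents $\ep_i$. This decomposition is obtained by using the relations \eqref{eq:4} to push the transposition $(\sigma^{-1}(d+1),d+1)$ (when $\sigma(d+1)\ne d+1$) and the factor $C_{d+1}$ (when $\ep_{d+1}=1$) to the right, tracking signs arising from the Clifford anticommutations.

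Commuting $u_1$ further past $w\in W(d)^o$ then gives $uw=\pm v\cdot u_1'$ with $u_1'$ again in the short list and $v:=u_0w'\in W(d)^o\cdot I_d\subseteq I_d$ by the two-sided $W(d)^o$-module hypothesis on $I_d$. It thus suffices to show, for each $v\in I_d$ and each possibility for $u_1'$, that $jtr_{vu_1'}(x_1,\ldots,x_{d+1})$ is a consequence of $jtr_v(x_1,\ldots,x_d)=0$.

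The four cases split into two mechanisms. For $u_1'=1$ or $C_{d+1}$, the singleton cycle $(d+1)$ in the permutation part of $v$ contributes a factor $str(J^{\ep_{d+1}}x_{d+1})$, so that (up to sign) $jtr_{vu_1'}(x_1,\ldots,x_{d+1})=jtr_v(x_1,\ldots,x_d)\cdot str(J^{\ep_{d+1}}x_{d+1})$, manifestly a consequence of $jtr_v=0$. For $u_1'=(j,d+1)$ or $(j,d+1)C_{d+1}$, Lemma \ref{lem:5} translated to $jtr$-polynomials via the defining equation \eqref{eq:j} expresses $jtr_{vu_1'}(x_1,\ldots,x_{d+1})$ (up to sign) as a substitution instance of $jtr_v$ in which $x_j$ is replaced by $x_{d+1}x_j$ or $Jx_{d+1}x_j$, again a consequence.

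The main technical obstacle is the sign bookkeeping arising from the Clifford anticommutations, the motion of $J$-factors, and the anticommutativity of odd $str$-factors in \eqref{eq:j}. Once the $jtr$-analog of Lemma \ref{lem:5} is established by inserting supercentral variables $e_i$ into the statement of that lemma and carefully tracking parities, the four cases combine to give the conclusion. Note that the conjugation results of Lemmas \ref{lem:2} and \ref{lem:3} are not needed for this proof; the decomposition into $u_0u_1$ together with the two-sided module hypothesis supplies all the structure.
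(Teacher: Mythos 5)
Your overall strategy is the paper's: reduce to a monomial $u$, split off the part of $u$ lying in $W(d)^o$ and absorb it into $I_d$ by the two-sided module hypothesis, and handle the residual factor from the short list $\{1,\ C_{d+1},\ (i,d+1),\ (i,d+1)C_{d+1}\}$ either by producing a factor $str(J^{\ep_{d+1}}x_{d+1})$ (fixed-point case) or by a Lemma~\ref{lem:5}-type substitution. The difference is that the paper keeps the residual factor on the \emph{left} of the element of $I_d$, whereas you put it on the right and then commute it back past $w$; that extra step is where your argument develops problems.

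There are two genuine gaps. First, in your second mechanism the case $u_1'=(j,d+1)C_{d+1}$ cannot end with ``a substitution instance of $jtr_v$ in which $x_j$ is replaced by $Jx_{d+1}x_j$'': the element $Jx_{d+1}x_j$ is odd while $x_j$ is an even variable, so this is not a legal specialization and not a consequence of $jtr_v=0$, which is only known to vanish on even arguments. The correct identity requires adjusting the Sergeev element by a Clifford factor at position $j$ --- one lands on $jtr_{vC_j^{\ep_{d+1}}}$ evaluated at the \emph{even} element $J^{\ep_{d+1}}x_jJ^{\ep_{d+1}}x_{d+1}$, and the module hypothesis is invoked a second time to put $vC_j^{\ep_{d+1}}$ back in $I_d$; this is exactly the paper's formula $jtr_{wu}=(-1)^\gamma jtr_{uC_i^{\ep_{d+1}}}(x_1,\ldots,J^{\ep_{d+1}}x_iJ^{\ep_{d+1}}x_{d+1},\ldots,x_d)$. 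Second, the signs you defer are not uniform: $C_{d+1}$ commutes past a monomial $w$ with a sign $(-1)^{|w|}$, and the exponent $\gamma$ above depends on $\ep_1+\cdots+\ep_d$ of the individual monomial, so the ``$\pm$'' cannot be pulled out of the sum over the monomials of $w\in I_d$ unless you first invoke Lemma~\ref{lem:4} to assume all terms of $w$ have the same parity --- a reduction the paper makes explicitly and you omit. The same issue infects your commutation step $u_1w=w'u_1'$: pushing $(j,d+1)$ through a monomial containing $C_j$ converts that $C_j$ into a $C_{d+1}$, so both $w'$ and the residual factor $u_1'$ change in a monomial-dependent way, and the claim $w'\in I_d$ is not automatic. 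Keeping the residual factor on the left, as the paper does, makes the resulting Sergeev element and substitution depend only on the fixed multiplier $u$ and avoids this entirely.
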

\begin{proof}
By the previous lemma we may assume without loss of generality
that either all terms of $w$ are even or all terms
are odd.

 Let $u=\sigma c$, $\sigma\in S_{d+1}$, $c\in C(d+1)$.
We first consider the case of $\sigma\in S_d$ so that $\sigma(d+1)=d+1$.  In this case
$u$ can be re-written as $C_{d+1}^{\ep_{d+1}}u'$ where $u'\in C(d)$ and so $u'f\in
I_d$.  Hence, in this case, we may replace $f$ by $u'f$ and
so assume without loss that $u=C_{d+1}^{\ep_{d+1}}$.  Also,
by Lemma~\ref{lem:4} we may assume that every term in $w$ is of degree~$g$ mod~2.
In this case
\begin{align*}
jtr_{C_{d+1}^{\ep_{d+1}}w}(x_1,\ldots,x_{d+1})&=(-1)^{g\ep_{d+1}} jtr_{wC_{d+1}^{\ep_{d+1}}}(x_1,
\ldots,x_{d+1})\\ &=(-1)^{g\ep_{d+1}}jtr_{w}(x_1,\ldots,x_d)str(J^{\ep_{d+1}}x_{d+1}),
\end{align*}
which is certainly a consequence of $f$.

If $w=\sigma c$ with $\sigma\notin S_d$ we 
may write $\sigma$ as $(i,d+1)\sigma'$, where $\sigma'(d+1)=d+1$;
 and we  write $c$ as $c=C_{d+1}^{\ep_{d+1}}c'$ where $c'\in C(d)$.  Then $C_{d+1}$ commutes
with $\sigma'$ and so  $w=(i,d+1)C_{d+1}^{\ep_{d+1}}w'$ where $w'\in W(d)^o$, and since
$W(d)^oI_n\subseteq I_n$ it suffices to consider
the case in which $w=(i,d+1)C_{d+1}^{\ep_{d+1}}.$
We now compute the \jtr\ polynomial corresponding to $wu$ where  $w=(i,d+1)C_{d+1}^{\ep_{d+1}}$ and~$u$
is a monomial $u=\sigma c=\sigma C_1^{\ep_1}\ldots C_d^{\ep_d}$.
First note that $wu=(-1)^\gamma (i,d+1).uc_{d+1}^{\ep_{d+1}}$, where $\gamma$ equals 
$\ep_{d+1}(\ep_1+\ldots+\ep_d)$; and since the multiplication of the permutations in $W(d+1)^o$ is
in the opposite their product in $S_{d+1}$, $wu$ equals $(-1)^\gamma\sigma(i,d+1)cC_{d+1}^{\ep_{d+1}}.$  Let $e_i$ have degree $\ep_i$.  Then by \eqref{eq:j}
\begin{equation}e_{d+1}\ldots e_{1}jtr_{wu}=(-1)^\gamma \cdot tr_{\sigma(i,d+1)}(e_1J^{\ep_1}x_1,\ldots,e_{d+1}J^{\ep_{d+1}}
x_{d+1}).\label{eq:7}\end{equation} Applying Lemma~\ref{lem:5},  \eqref{eq:7} equals
$$
(-1)^\gamma\cdot tr_\sigma(e_1J^{\ep_1}x_1,\ldots,e_iJ^{\ep_i}x_ie_{d+1}J^{\ep_{d+1}}x_{d+1},\ldots,
e_{d}J^{\ep_{d}})
x_{d}.$$  Focusing on the $i^{th}$ term:
$$e_iJ^{\ep_i}x_ie_{d+1}J^{\ep_{d+1}}x_{d+1}=e_{d+1}e_iJ^{\ep_i+\ep_{d+1}}J^{\ep_{d+1}}x_{d+1}J^{\ep_{d+1}}
x_{d+1}$$ and
so \eqref{eq:7} equals  $$
(-1)^\gamma\cdot e_{d}\ldots e_{i+1}e_{d+1}e_{i}\ldots e_1 jtr_{\sigma c'}(x_1,\ldots, J^{\ep_{d+1}}x_{i}J^{\ep_{d+1}}x_{d+1},\ldots,x_d),$$  where $c'=C_1^{\ep_1}\cdots C_i^{\ep_i+\ep_{d+1}}\cdots C_d^{\ep_d}$.  The above
equals 
$$(-1)^\gamma\cdot e_{d+1}\ldots e_{1} jtr_{u C_i^{\ep_{d+1}}}(x_1,\ldots, J^{\ep_{d+1}}x_{i}J^{\ep_{d+1}}x_{d+1},\ldots,x_d).$$  and so
$$jtr_{wu}=(-1)^\gamma\cdot  jtr_{u C_i^{\ep_{d+1}}}(x_1,\ldots, J^{\ep_{d+1}}x_{i}J^{\ep_{d+1}}x_{d+1},\ldots,x_d).$$
Since $u C_i^{\ep_{d+1}}\in W(d)^o$ and the above polynomial
 is gotten from  $jtr_{u C_i^{\ep_{d+1}}}$ by the
 homogeneous substitution $x_i\mapsto
  J^{\ep_{d+1}}x_{i}J^{\ep_{d+1}}x_{d+1}$ it is
a consequence of it.
\end{proof}
\begin{thm} Assume that $W(d)^oI_dW(d)^o=I_d$.  Then for all
$e\ge d$ the elements of $W(e)^oI_dW(e)^o$ are all consequences
of $I_d$.  In particular, they are all identities of~$A$.\label{thm:2.7}
\end{thm}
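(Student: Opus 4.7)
The plan is to induct on $e \geq d$. The base case $e = d$ is precisely the standing hypothesis $W(d)^o I_d W(d)^o = I_d$, whose elements are identities of $A$ by assumption. For the inductive step, assume the statement at level $e$, and set $I'_e := W(e)^o I_d W(e)^o \subseteq JTR_e$. Then $I'_e$ is tautologically a $W(e)^o$-bimodule, and by the inductive hypothesis each of its elements is a consequence of $I_d$, hence in particular an identity of $A$.

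Now take monomials $u, v \in W(e+1)^o$ and $w \in I_d$; I want $uwv$, viewed as an element of $JTR_{e+1}$, to be a consequence of $I_d$. Monomials are invertible in $W(e+1)^o$ (since $\sigma \in S_{e+1}$ is invertible and each $C_i$ is its own inverse), so conjugation yields $v \cdot (uwv) \cdot v^{-1} = vuw$, and by Theorem~\ref{cor:1} the problem reduces to showing that $u'w$ is a consequence of $I_d$ for every $u' \in W(e+1)^o$ and every $w \in I_d \subseteq I'_e$. But this is exactly Lemma~\ref{lem:2.5}, applied with the pair $(d,I_d)$ replaced by $(e, I'_e)$: the lemma's hypotheses (that the relevant object is a $W(e)^o$-bimodule whose elements are identities) are precisely what the inductive hypothesis supplies, and its conclusion is that $u'w$ is a consequence of $I'_e$, hence of $I_d$. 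This closes the induction.

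The one substantive verification is that the proof of Lemma~\ref{lem:2.5} really does go through verbatim when its ``$I_d$'' is replaced by $I'_e$ and $d$ by $e$; I expect this to be routine, since that proof uses only the bimodule property and the fact that the elements are identities, never any feature specific to the numerical value $d$. The conjugation trick supplied by Theorem~\ref{cor:1} is the small but essential ingredient that lets the one-sided statement of Lemma~\ref{lem:2.5} yield the two-sided conclusion $W(e)^o I_d W(e)^o \subseteq I_e$ demanded by the theorem.
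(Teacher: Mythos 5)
Your proposal is correct and follows essentially the same route as the paper: the paper reduces by induction to the case $e=d+1$, uses Theorem~\ref{cor:1} (conjugation by invertible monomials) to pass from two-sided to one-sided multiplication, and then invokes Lemma~\ref{lem:2.5}. Your explicit remark that Lemma~\ref{lem:2.5} must be reapplied at each level with $(e, W(e)^o I_d W(e)^o)$ in place of $(d, I_d)$ is exactly the content of the paper's terse ``follows from the previous lemma by induction on $e$.''
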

\begin{proof} Follows from the previous lemma by induction on $e$.
\end{proof}
\section{\jtr-Identies of $M(n,n)$}
\subsection{Theorems of Sergeev and Berele}
For the remainder of the paper we take $F$ to be characteristic zero.
Let $U$ be a $\bbz_2$-graded vector space with graded dimension $(n,n)$ and assume that $J$ is a degree~1 idempotent isomorphism of
$U$, i.e., a linear transformation with $J^2=id$, $J(U_0)=U_1$ and $J(U_1)=U_0$.  In\cite{S85} Sergeev defined an action of $W(d)^o$ on $U^{\otimes d}$
such that if $u_1,\ldots,u_d\in U$ are homogeneous, then 
$$\sigma(u_1\otimes\cdots \otimes u_d)=\pm u_{\sigma(1)}\otimes\cdots\otimes u_{\sigma(d)}$$
and $$C_i(u_1\otimes \cdots\otimes u_d)=\pm u_1\otimes \cdots\otimes Ju_i\otimes\cdots\otimes u_d,$$
where the sign depends on how many degree one elements are switched.   The exact details are not necessary for our
purposes.  Sergeev proved two basic properties of this action, one about the image of $W(d)^o$ and one
about the kernel of the action.  The former result is that the image of $W(d)^o$ in $End(U^{\otimes d})$
equals the centralizer of the action of $q(n)$, the queer superalgebra.  For the kernel, it is important to know
that $W(d)^o$ is a direct sum of two sided ideals $\sum\oplus I_\lambda $, each $I_\lambda$ is a simple graded algebra,
and the subscripts are naturally indexed by partitions of $d$ into distinct parts.  If we write
$\phi:W(d)^o\rightarrow End(U^{\otimes d})$ then Sergeev proved that the kernel is the sum $\sum\oplus I_\lambda$ summed over all
partitions of height greater than~$n$.  Note that the smallest such partition would be $\delta=(n+1,n,\ldots,1)$ and in
general if $\lambda$ is a partition with distinct parts, $\lambda$ will have height greater than~$n$ if and only if
$\delta\subseteq\lambda$.  For convenience, we record these results of Sergeev.

\begin{thm}  The algebra $W(d)^o$ is semisimple as an algebra and as a graded algebra.  As a graded algebra it
decomposes as a direct sum of graded simple ideals $\sum\oplus I_\lambda$ indexed by partitions of~$d$ with all
distinct parts.  There is a natural map $\phi:W(d)^o\rightarrow End(\vd)$ whose kernel is the sum of the $I_\lambda$
where $\lambda$ has height greater than~$d$.\label{thm:Ser}\end{thm}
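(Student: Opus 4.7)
The plan is that this theorem records results of Sergeev from~\cite{S85}, so in the paper itself I would expect the ``proof'' to be essentially a citation. Nonetheless, I would sketch how each of the three claims arises and where the genuine content lies.

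For graded semisimplicity and the decomposition $W(d)^o=\bigoplus I_\lm$: In characteristic zero $FS_d$ is semisimple and the Clifford algebra $C(d)$ is graded-semisimple, since its defining quadratic form is nondegenerate. Because $W(d)^o$ is a smash product of $FS_d$ acting on $C(d)$, a graded Maschke argument---first split as a $C(d)$-module, then average the resulting projector over $S_d$---produces graded splittings of every short exact sequence, yielding graded semisimplicity. To index the graded-simple ideals I would invoke Schur's theory of projective representations of $S_d$: the irreducible super-modules of $W(d)^o$ are in bijection with strict partitions of $d$, and each strict $\lm$ produces a graded-simple ideal $I_\lm$ of type $M$ or $Q$ according to the parity of $d$ minus the height of $\lm$. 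A dimension count against the classical Schur formulas then confirms $\sum_\lm\dim I_\lm=\dim W(d)^o = 2^d\cdot d!$, so the decomposition is exhaustive.

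For the kernel of $\phi$: Each $I_\lm$ is graded-simple, so $\phi(I_\lm)$ is either zero or injective on $I_\lm$. The genuine content is deciding which $\lm$ survive in $\vd$, which is Sergeev's queer Schur--Weyl duality: under the commuting actions of $q(n)$ and $W(d)^o$, the tensor space $\vd$ decomposes multiplicity-freely, with only strict partitions of height at most $n$ appearing on the $W(d)^o$-side. Consequently $\ker\phi=\bigoplus I_\lm$ summed over strict $\lm$ of height exceeding $n$, matching the theorem, and the minimal such $\lm$ is $\delta=(n+1,n,\ldots,1)$ as noted.

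The principal obstacle is this last step: verifying that exactly the strict partitions of height at most $n$ contribute to $\vd$ requires the full double-centralizer theorem of~\cite{S85}, which is genuinely deep. The first two claims, by contrast, follow from general principles once one admits the projective representation theory of $S_d$ and standard facts about graded-semisimple algebras. In a self-contained write-up I would defer the kernel computation to Sergeev's paper and verify by hand only the semisimplicity and indexing, together with the compatibility between the graded structure on each $I_\lm$ and the graded action on $\vd$.
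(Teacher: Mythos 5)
Your proposal is correct and matches the paper's approach: the paper gives no proof at all, stating the theorem explicitly as a record of Sergeev's results from \cite{S85}, and your sketch accurately locates the genuinely deep input (the queer double-centralizer theorem determining which strict partitions appear in $\vd$) while noting that the graded semisimplicity and indexing by strict partitions follow from standard projective representation theory of $S_d$. Note only that ``height greater than~$d$'' in the statement as printed is a typo for ``height greater than~$n$,'' as both the surrounding prose of the paper and your sketch correctly assume.
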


\begin{defn}We let $DP(d)$ be the set if partitions of $d$ into distinct parts.\end{defn}

In \cite{B13} we let $V$ be a $2n$ dimensional space over the Grassmann algebra~$E$. 
The algebra $End_E(V)$ is isomorphic to $M(n,n)$ as a superalgebra with supertrace. Using the notion of extending an $F$-linear map to an $E$ linear one, we  
embed $End(U^{\otimes d})$ into $End(V^{\otimes d})$.  Composing this with
$\phi$ creates a map
$$\Phi: W(d)^o\rightarrow End(V^{\otimes d})$$
which has the same kernel as $\phi$.  Next, adapting some standard techniques from
invariant theory we constructed an isomorphism
$$End(V^{\otimes d})\cong (End(V)^{\otimes d})^*.$$  We denoted by $T_w$ the functional
on $End(V)^{\otimes d})$ corresponding to $w\in W(d)^o$.  

In section 5 of \cite{B13} we computed $T_w(A_1\ootimes A_d)$ for
$A_1,\ldots,A_d$ degree zero elements of $End(V)$. Let $w=\sigma C_1^{\ep_1}\cdots C_d^{\ep_d}\in W(d)^o$.  In lemma~5.8 we showed that 
$$T_w(A_1\ootimes A_d)=T_\sigma(\tilde{P}^{\ep_1}A_1\ootimes\tilde{P}^{\ep_d}A_d),$$
where $\tilde{P}$ is a degree one element of $End(V)$ corresponding to~$J$.  Letting $e_i$ be Grassmann elements of degree $\ep_i$ we
showed in the proof of lemma~5.7 that
$$e_d\cdots e_1 T_\sigma(\tilde{P}^{\ep_1}A_1\ootimes\tilde{P}^{\ep_d}A_d)=
T_\sigma(e_1\tilde{P}^{\ep_1}A_1\ootimes e_d\tilde{P}^{\ep_d}).$$
Continuing to work backwards through the lemmas of
\cite{B13}, lemma~5.6 can be used to evaluate the right hand side of
this equation, since the factors in the tensor product are now all degree zero.  It implies
that $$T_\sigma(e_1\tilde{P}^{\ep_1}A_1\ootimes e_d\tilde{P}^{\ep_d}A_d)=
tr_\sigma(e_1\tilde{P}^{\ep_1}A_1,\ldots, e_d\tilde{P}^{\ep_d}A_d).$$  Combining all this
we get
$$e_d\ldots e_1T_w(A_1\ootimes A_d)=tr_\sigma(e_1\tilde{P}^{\ep_1}A_1,\ldots, e_d\tilde{P}^{\ep_d}A_d),$$ and comparing with \eqref{eq:j} gives this lemma:

\begin{lem} If  $A_1,\ldots,A_d\in End(V)_0$ and $w\in W(d)^o$ then
$T_w(A_1\ootimes A_d)=jtr_w(A_1,\ldots,A_d)$.\qed
\end{lem}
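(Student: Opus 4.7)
The plan is to reduce to the case of a monomial $w = \sigma C_1^{\ep_1}\cdots C_d^{\ep_d}$ by linearity of both $T_w$ and $jtr_w$ in $w$, and then to chain together the three lemmas from \cite{B13} that have just been recalled. The computation requested is essentially bookkeeping: we need to recognize that the right-hand side of the defining equation \eqref{eq:j} of $jtr_w$ is produced, term by term, by the sequence of rewritings supplied by lemmas~5.6, 5.7, and 5.8 of \cite{B13}.

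Concretely, for a monomial $w$, I would first apply lemma~5.8 of \cite{B13} to replace $T_w$ with $T_\sigma$ acting on the twisted tensor $\tilde P^{\ep_1}A_1\ootimes \tilde P^{\ep_d}A_d$. Next, pick supercentral Grassmann scalars $e_1,\ldots,e_d$ with $\deg e_i=\ep_i$, and use the formula extracted from the proof of lemma~5.7 of \cite{B13} to slide them past $T_\sigma$, obtaining
$$e_d\cdots e_1\, T_w(A_1\ootimes A_d) = T_\sigma(e_1\tilde P^{\ep_1}A_1\ootimes e_d\tilde P^{\ep_d}A_d).$$
Since each factor $e_i\tilde P^{\ep_i}A_i$ is now of total degree zero, lemma~5.6 of \cite{B13} evaluates $T_\sigma$ on this all-even tensor as the ordinary trace monomial
$$tr_\sigma(e_1\tilde P^{\ep_1}A_1,\ldots,e_d\tilde P^{\ep_d}A_d).$$
This is exactly the right-hand side of \eqref{eq:j} under the identification of $\tilde P$ with $J$ provided by $End_E(V)\cong M(n,n)$, so the resulting equation matches the definition of $e_d\cdots e_1\, jtr_w(A_1,\ldots,A_d)$.

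The final step, and the only subtle one, is to cancel the factor $e_d\cdots e_1$ from both sides to conclude that $T_w(A_1\ootimes A_d)=jtr_w(A_1,\ldots,A_d)$. The main obstacle here is justifying this cancellation cleanly, since the $e_i$ are Grassmann elements of prescribed degrees rather than invertible scalars. The standard way to handle this is to choose $e_1,\ldots,e_d$ to be linearly independent generators of $E$ of the correct degrees, so that the monomial $e_d\cdots e_1$ is a nonzero basis element; then the equality of two elements of $E\otimes End(V)$ after multiplication by this basis element forces equality of the original elements, by extracting the coefficient of $e_d\cdots e_1$ in a Grassmann envelope argument. Once this cancellation is carried out, linearly extending from monomial $w$ to general $w\in W(d)^o$ completes the proof.
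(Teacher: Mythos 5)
Your proposal is correct and follows exactly the paper's argument: the lemma is proved by chaining lemmas 5.8, 5.7, and 5.6 of \cite{B13} to obtain $e_d\cdots e_1T_w(A_1\ootimes A_d)=tr_\sigma(e_1\tilde P^{\ep_1}A_1,\ldots,e_d\tilde P^{\ep_d}A_d)$ and then comparing with \eqref{eq:j}. Your extra care about cancelling $e_d\cdots e_1$ is already built into the paper's setup, since \eqref{eq:j} is taken as the \emph{definition} of $jtr_w$ via a uniqueness condition, but making it explicit is harmless.
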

It follows that $w\in W(d)^o$ corresponds to a \jtr-identity
for $End(V)$ if and only if $T_w$ is zero.  But $T_w=0$
precisely when $w$ is in the kernel of $\Phi$, which is the
same as the kernal of $\phi$ and which was identified by
Sergeev.  Finally, $End(V)_0$ is isomorphic to $M(n,n)$ as an
algebra with \jtr.  We now have the main theorem of this section:
\begin{thm} Let $w\in W(d)^o$.  Then $jtr_w$ is a \jtr-identity
for $M(n,n)$ if and only if $w$ lies in the two sided ideal
$\sum\oplus I_\lambda$, summed over all $\lambda$ of 
height greater than~$n$.\label{thm:3.2}\qed
\end{thm}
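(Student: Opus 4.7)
The plan is to combine the immediately preceding lemma, which identifies $jtr_w(A_1,\ldots,A_d)$ with the value $T_w(A_1\otimes\cdots\otimes A_d)$ for degree-zero $A_i\in End(V)$, with Sergeev's theorem \ref{thm:Ser} describing the kernel of $\phi$. The chain of equivalences I want to establish is:
\[
jtr_w \text{ is an identity of } M(n,n) \iff T_w=0 \text{ on } End(V)_0^{\otimes d} \iff T_w=0 \iff \Phi(w)=0 \iff \phi(w)=0,
\]
after which Sergeev's description of $\ker\phi$ finishes the proof.

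First I would use the isomorphism $End(V)_0\cong M(n,n)$ as \jtr-algebras to restate the identity condition as $jtr_w(A_1,\ldots,A_d)=0$ for all $A_i\in End(V)_0$, and then invoke the preceding lemma to rewrite this as $T_w(A_1\otimes\cdots\otimes A_d)=0$ for all $A_i\in End(V)_0$. This gives the first equivalence trivially.

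The step I expect to be the main obstacle is the second equivalence: upgrading vanishing of $T_w$ on the degree-zero tensor factors to vanishing of $T_w$ on all of $End(V)^{\otimes d}$. The idea is to exploit that $V$ is a free $E$-module and that the construction of $T_w$ from \cite{B13} is built by $E$-linear extension from the $F$-linear map on $U^{\otimes d}$; concretely, any homogeneous $A\in End(V)$ can be written as $e\cdot A'$ with $A'\in End(V)_0$ and $e\in E$ of the appropriate degree, and pulling the $e_i$ factors out of $T_w$ (exactly as in the derivation just performed for the preceding lemma, which uses the formula with $e_d\cdots e_1$ in front) shows that $T_w$ on a homogeneous tensor reduces, up to a Grassmann scalar, to its value on a degree-zero tensor. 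Since such homogeneous tensors span $End(V)^{\otimes d}$ over $E$, vanishing on $End(V)_0^{\otimes d}$ forces $T_w\equiv 0$.

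Once $T_w=0$, the isomorphism $End(V^{\otimes d})\cong (End(V)^{\otimes d})^*$ recalled from \cite{B13} identifies this with $\Phi(w)=0$; and because $\Phi$ is the $E$-linear extension of $\phi$ and both have the same matrix description, $\ker\Phi=\ker\phi$. Applying Theorem~\ref{thm:Ser}, $w\in\ker\phi$ is exactly the condition that $w$ lies in the sum of those ideals $I_\lambda$ with $\lambda\in DP(d)$ of height strictly greater than $n$, which is the desired conclusion.
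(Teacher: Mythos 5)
Your overall route is exactly the paper's: identify $jtr_w$ with $T_w$ on degree-zero tensors via the preceding lemma, deduce that $jtr_w$ is an identity if and only if $T_w=0$ if and only if $w\in\ker\Phi=\ker\phi$, and finish with Sergeev's description of $\ker\phi$ from Theorem~\ref{thm:Ser}. The paper passes over the middle equivalence (``$T_w$ vanishes on $End(V)_0^{\otimes d}$ iff $T_w=0$'') without comment, and you are right to single it out as the one point needing an argument. However, the argument you give for it rests on a false claim: a homogeneous degree-one element of $End(V)$ need \emph{not} factor as $e\cdot A'$ with $e\in E_1$ and $A'\in End(V)_0$, and it need not even lie in the $E$-span of such products. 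For instance $j=\left(\begin{smallmatrix}0&I\\I&0\end{smallmatrix}\right)$ is degree one, but every entry of every element of $E\cdot E_1\cdot End(V)_0$ lies in the augmentation ideal of $E$, so no combination of such products can have an entry equal to~$1$. Consequently your assertion that ``such homogeneous tensors span $End(V)^{\otimes d}$ over $E$'' fails, and the reduction as written does not reach all of $End(V)^{\otimes d}$.

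The step is true, but the multiplication must go the other way. Given arbitrary homogeneous $A_1,\ldots,A_d$ of degrees $\delta_1,\ldots,\delta_d$, choose $e_i\in E$ homogeneous of degree $\delta_i$, taking $e_i=1$ when $\delta_i=0$ and a Grassmann generator not occurring in any entry of any $A_j$ when $\delta_i=1$. Then each $e_iA_i$ lies in $End(V)_0$, so $T_w(e_1A_1\ootimes e_dA_d)=0$ by hypothesis, while the graded $E$-linearity quoted from \cite{B13} gives $T_w(e_1A_1\ootimes e_dA_d)=\pm\, e_d\cdots e_1\,T_w(A_1\ootimes A_d)$. Multiplication by $e_d\cdots e_1$ is injective on the finitely generated subalgebra of $E$ containing $T_w(A_1\ootimes A_d)$, so that value is zero; since homogeneous tensors span $End(V)^{\otimes d}$ over $F$, we get $T_w=0$. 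With this correction the rest of your chain of equivalences, and hence the proof, goes through exactly as in the paper.
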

\subsection{A Combinatorial Lemma}
Theorem~\ref{thm:3.2} tells us that the multilinear,
degree~$d$ identities of $M(n,n)$, $I_d=I_d(M(n,n))$, form a two sided ideal in each $W(d)^o$; and 
Theorem~\ref{thm:2.7} says that the two sided ideal induced
from $I_d$, considered as a subspace of $W(d)^o,$ to each $W(e)^o$, $e>d$ consists of algebraic consequences of $I_d$.
Fortunately, the question of how ideals in $W(d)^o$ induce up
is easily handled by a theorem of Schur and
Jozefiak, and a theorem of Stembridge, as we now relate.  We first focus on the case of $W(d+1)^o I_\lm W(d+1)^o$, where $\lm$ is a partition
of~$d$ in $DP(d)$ so $I_\lm$ is an ideal of $W(d)^o$.
\begin{lem} If $I\triangleleft W(d)^o$ is a two sided ideal, and if $W(d+1)^o I$ decomposes as a direct sum of left ideals as $\sum m_\mu  M_\mu$,
where $M_\mu\subseteq I_\mu$ is a minimal left ideal and $m_\mu\ne0$,
 then $W(d+1)^o IW(d+1)^o=\sum I_\mu$ summed over all $\mu$ with $m_\mu\ne0$.\end{lem}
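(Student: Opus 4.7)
The plan is to exploit the semisimplicity of $W(d+1)^o$ furnished by Theorem~\ref{thm:Ser}. For each $\mu\in DP(d+1)$ let $e_\mu$ be the central idempotent attached to the graded simple summand $I_\mu$, so that $I_\mu=e_\mu W(d+1)^o=W(d+1)^o e_\mu$ and $1=\sum_\mu e_\mu$. Since $W(d+1)^o I W(d+1)^o$ is a two-sided ideal of a (graded) semisimple algebra whose graded simple components are the $I_\mu$, it must take the form $W(d+1)^o I W(d+1)^o=\sum_{\mu\in T}I_\mu$ for some subset $T\subseteq DP(d+1)$. The task then reduces to identifying $T$ with the set of $\mu$ for which $m_\mu\neq 0$.

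For one inclusion, suppose $m_\mu\neq 0$. Then $W(d+1)^o I$ contains a nonzero graded minimal left ideal $M_\mu\subseteq I_\mu$. The two-sided ideal $W(d+1)^o M_\mu W(d+1)^o$ is nonzero and is contained in the graded simple algebra $I_\mu$, so it must equal $I_\mu$. Combined with $M_\mu\subseteq W(d+1)^o I$, this yields $I_\mu\subseteq W(d+1)^o I W(d+1)^o$, so $\mu\in T$.

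For the other inclusion, assume $I_\mu\subseteq W(d+1)^o I W(d+1)^o$. Multiplying through by $e_\mu$ and moving it past the outer factors using centrality, one obtains
$$I_\mu=e_\mu I_\mu\subseteq e_\mu W(d+1)^o I W(d+1)^o=(e_\mu W(d+1)^o I)\,W(d+1)^o,$$
so $e_\mu W(d+1)^o I\neq 0$. But $e_\mu W(d+1)^o I$ is precisely the $\mu$-isotypic component of the semisimple left module $W(d+1)^o I=\sum_\nu m_\nu M_\nu$, and its nonvanishing forces $m_\mu\neq 0$.

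The only point of caution—and the main obstacle I anticipate—is a graded-versus-ungraded bookkeeping issue: the terms ``minimal left ideal,'' ``two-sided ideal generated by $M_\mu$,'' and ``$I_\mu$ is simple'' must all be read in the graded sense, so that the graded simplicity of $I_\mu$ really does rule out proper nonzero graded two-sided subideals. Once that convention is fixed, the argument is nothing more than the standard isotypic-component decomposition in a semisimple algebra applied to the left module $W(d+1)^o I$.
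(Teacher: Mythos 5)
Your argument is correct and rests on the same facts as the paper's (much terser) proof, which simply observes that $M_\mu I_\nu$ is $0$ for $\nu\ne\mu$ and equals $I_\mu$ for $\nu=\mu$, so that multiplying $\sum m_\mu M_\mu$ on the right by $W(d+1)^o=\sum_\nu I_\nu$ gives the claim; your two-inclusion version with central idempotents is just a more explicit rendering of the same isotypic-component reasoning. Your caution about reading everything in the graded sense is well placed and applies equally to the paper's own one-line proof.
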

\begin{proof} Since $M_\mu$ is a right ideal contained in the
minimal two-sided ideal $I_\mu$, $$M_\mu I_\nu=\begin{cases} 0,&\mu\ne\mu\\ I_\mu,&\mu=
\nu
\end{cases}$$
\end{proof}
The computation of $W(d+1)^o I_\lm W(d+1)^o$ is accomplished in the next lemma.
\begin{lem} Let $\lm\in DP(d)$.  Then $W(d+1)^oI_\lm W(d+1)^o=\sum\oplus I_\mu$ where $\mu$ runs  over all partitions in $DP(d+1)$ containing~$\lm.$
\end{lem}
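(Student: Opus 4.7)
The plan is to combine the previous lemma with the branching rule for induction $W(d)^o\subseteq W(d+1)^o$. By that lemma, it suffices to determine, for each $\mu\in DP(d+1)$, whether some minimal left ideal of $W(d+1)^o$ lying in $I_\mu$ appears with positive multiplicity in the decomposition of $W(d+1)^o I_\lm$ as a left $W(d+1)^o$-module.

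First I would rewrite $W(d+1)^o I_\lm$ as an induced module. As a left $W(d)^o$-module, the graded simple ideal $I_\lm$ is a positive multiple of the graded irreducible $V_\lm$ indexed by $\lm$, so $W(d+1)^o I_\lm\cong W(d+1)^o\otimes_{W(d)^o}I_\lm$ is a positive multiple of $\mathrm{Ind}_{W(d)^o}^{W(d+1)^o}V_\lm$. Consequently the $\mu$ for which $I_\mu\subseteq W(d+1)^o I_\lm W(d+1)^o$ are exactly the $\mu\in DP(d+1)$ for which $V_\mu$ occurs as a constituent of this induced module.

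The key input is then the branching rule for Sergeev algebras attributed above to Schur, Jozefiak and Stembridge: for $\lm\in DP(d)$ and $\mu\in DP(d+1)$, the multiplicity of $V_\mu$ in $\mathrm{Ind}_{W(d)^o}^{W(d+1)^o}V_\lm$ is nonzero precisely when $\mu$ is obtained from $\lm$ by adding a single box, i.e.\ when $\mu\supseteq\lm$ in $DP(d+1)$. Inserting this into the conclusion of the previous paragraph and quoting the previous lemma yields $W(d+1)^oI_\lm W(d+1)^o=\bigoplus_\mu I_\mu$ summed over those $\mu\in DP(d+1)$ with $\mu\supseteq\lm$.

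The main obstacle is extracting the branching rule in exactly the form required here, namely for the graded algebra $W(d)^o$ with its indexing by strict partitions, and verifying that the multiplicities $[V_\mu:V_\lm]$ are strictly positive for every $\mu\in DP(d+1)$ containing $\lm$, so that no such $I_\mu$ is accidentally missed and no $I_\mu$ with $\mu\not\supseteq\lm$ appears. Once the branching statement is cited in the appropriate graded formulation, the remainder is routine module-theoretic bookkeeping built on the previous lemma.
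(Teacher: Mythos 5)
Your proposal is correct and follows essentially the same route as the paper: reduce via the previous lemma to identifying the constituents of the left module $W(d+1)^o I_\lm\cong W(d+1)^o\otimes_{W(d)^o}I_\lm$, then invoke the branching rule for strict partitions. The only difference is presentational: where you cite the branching rule as a black box, the paper derives it by identifying the induced module with the outer product $M_{[1]}\hat\otimes M_\lm$ and then applying the Schur--Jozefiak correspondence with Q-Schur functions together with Stembridge's Pieri-type rule to see that the multiplicity is nonzero exactly when $\lm\subseteq\mu$.
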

\begin{proof}As a right $W(d)^o$ module $I_\lm$ decomposes
as a direct sum of isomorphic simple modules $M_\lm$.
Now $$W(d+1)^o M_\lm\cong W(d+1)\otimes_{W(d)^o}M_\lm
\cong W(d+1)^o/W(d)^o\otimes_F M_\lm.$$
We compare this to 
$$M_{[1]}\hat\otimes M_\lm=W(d+1)^o\otimes_{W(1)^o\times
W(d)^o}\left(M_{[1]}\otimes_F M_\lm\right).$$  However,
$W(1)^o$ is graded simple of dimension $(1,1)$ and  $M_{[1]}$ is isomorphic to $W(1)^o$.  It follows that
the right hand side is isomorphic to $$W(d+1)^o\otimes_{W(d)^o} M_\lm$$ and so $W(d+1)^o M_\lm$
is isomorphic to $M_{[1]}\hat\otimes M_\lm$.  The outer
tensor product now follows from the theorems we alluded
to earlier.  First, just like the outer tensor product of 
modules for the symmetric group is reflected in the product of Schur functions, Schur and Jozefiak proved 
that the outer product of Sergeev modules is reflected
in the product of Q-Schur functions.  See \cite{M} for
an account of these functions.
\begin{thm*} [Schur, Jozefiak]The Grothendick ring of $W(d)^o$ modules is
isomorphic to the ring of Q-Schur functions with the isomorphism
given by $M_\lm\mapsto Q_\lm(x)$.
\end{thm*}
This means that $M_\mu\hat\otimes M_\nu=\sum m(\lm;\mu,nu)
M_\lm$ if and only if $Q_\mu(x)Q_\nu(x)=\sum m(\lm;\mu,\nu)Q_\lm(x).$  The latter products were computed  by Stembridge in~\cite{Stem}, see also \cite{M}~(8.18).   Stembridge
proved an analogue of the Littlewood-Richardson rule for
the functions $P_\lm(x)$ which have the property that each $Q_\lm(x)$
is a (non-zero) constant multiple of $P_\lm(x)$ and so
we can use Stembridge's rule to determine when the coefficients we are calling $m(\lm;\mu,\nu)$ are non-zero.  We will need only the following special case:
\begin{thm*}[Stembridge] Let $\lm\in DP(n)$ and $\mu\in DP(n+1)$.  Then
$m(\mu;\lm,[1])\ne0$ if and only if $\lm\subseteq\mu$.
\end{thm*}
Our lemma now follows.
\end{proof}
The following corollary follows by induction.
\begin{cor}Let $\delta$ be the partition $(n+1,n,\ldots,1)\in DP({n+2\choose2})$ and let $e>{n+2\choose2}$.  Then
$W(e)^o I_\delta W(e)^o$ equals the sum $\sum\oplus I_\lm$,
where $\lm$ runs over partitions of $e$ with distinct parts, of height
greater than~$n$.\label{cor:3.9}
\end{cor}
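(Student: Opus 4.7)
The plan is to prove the corollary by induction on $e$, with the preceding lemma providing the inductive step. The base case $e = \binom{n+2}{2}$ is immediate: $I_\delta$ is already a two-sided ideal of $W(e)^o$, and the only partition in $DP(e)$ of height greater than $n$ is $\delta$ itself, since any partition into at least $n+1$ distinct positive parts has size at least $1+2+\cdots+(n+1) = \binom{n+2}{2}$ with equality only for $\delta$.

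For the inductive step with $e > \binom{n+2}{2}$, I would assume the statement at $e-1$ and expand. The inductive hypothesis gives
\[W(e-1)^o I_\delta W(e-1)^o = \sum\oplus I_\nu,\]
summed over $\nu \in DP(e-1)$ of height greater than $n$, and applying the preceding lemma to each such $I_\nu$ yields $W(e)^o I_\nu W(e)^o = \sum\oplus I_\mu$ over $\mu \in DP(e)$ containing $\nu$. Consequently $W(e)^o I_\delta W(e)^o$ contains $\sum\oplus I_\mu$, where $\mu$ ranges over partitions in $DP(e)$ that contain some $\nu \in DP(e-1)$ of height greater than $n$.

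The combinatorial heart of the argument, which I expect to be the main obstacle, is to show that every $\mu \in DP(e)$ of height greater than $n$ arises in this way; equivalently, that $\mu$ admits a removable corner whose deletion lies in $DP(e-1)$ and still has height greater than $n$. When the height of $\mu$ is at least $n+2$, a corner from the bottom row works. The delicate case is height exactly $n+1$ with $\mu_{n+1} = 1$, where the bottom-row corner is forbidden. Here $\mu \supseteq \delta$ (because $\mu$ has $n+1$ distinct positive parts) together with $|\mu| > |\delta|$ force some successive difference $\mu_i - \mu_{i+1} \geq 2$ for an $i \leq n$, giving a removable corner in row $i$ whose deletion preserves the height.

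For the reverse containment, by Theorem~\ref{thm:Ser} the ideal $I_\delta$ lies in the kernel of $\phi$, and the embedding $W(d)^o \subseteq W(e)^o$ (with $d = \binom{n+2}{2}$) preserves this, since $\phi$ on $W(e)^o$ restricts to $\phi$ on $W(d)^o$ tensored with the identity on the extra tensor factors. The kernel of $\phi$ on $W(e)^o$ is the two-sided ideal $\sum\oplus I_\lambda$ over $\lambda \in DP(e)$ of height greater than $n$, which therefore contains $W(e)^o I_\delta W(e)^o$. Combined with the inclusion obtained above, this gives equality and completes the induction.
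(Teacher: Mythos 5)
Your proof is correct and follows the same route as the paper, whose entire proof is the one line ``follows from the previous lemma by induction on $e$''; you have simply carried out that induction explicitly. The details you supply — the base case via minimality of $\delta$ among partitions with $n+1$ distinct parts, and the corner-removal argument showing every $\mu\in DP(e)$ of height greater than $n$ covers some $\nu\in DP(e-1)$ of height greater than $n$ — are exactly the combinatorial content the paper leaves to the reader, and your reverse containment (via $\ker\phi$, or just from the fact that adding boxes cannot decrease height) is sound.
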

\subsection*{The Main Theorem}
Theorem~\ref{thm:2.7}, Theorem~\ref{thm:3.2} and Corollary~\ref{cor:3.9} are the ingredients we need to prove our
main theorem.
\begin{thm} All pure \jtr\ identities of $M(n,n)$ are consequences of
the identities of degree $n+2\choose2$.
\end{thm}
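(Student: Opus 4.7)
The plan is to combine Theorem~\ref{thm:3.2}, Theorem~\ref{thm:2.7}, and Corollary~\ref{cor:3.9}; once those three are in hand, the theorem reduces to verifying that their hypotheses line up.

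First I would set $d = \binom{n+2}{2} = 1 + 2 + \cdots + (n+1)$ and make the combinatorial observation that $d$ is the smallest integer admitting a partition into distinct parts of height greater than~$n$, and that $\delta = (n+1, n, \ldots, 1)$ is the \emph{unique} such partition of that size. Feeding this into Theorem~\ref{thm:3.2} gives $I_e = 0$ for $e < d$ and $I_d = I_\delta$ inside $W(d)^o$. In particular $I_d$ is already a two-sided ideal of $W(d)^o$ (being a single graded-simple component in the decomposition of Theorem~\ref{thm:Ser}), so the hypothesis $W(d)^o I_d W(d)^o = I_d$ required by the convention preceding Theorem~\ref{thm:2.7} is automatic.

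Second, I would apply Theorem~\ref{thm:2.7} to conclude that for every $e \geq d$, each element of $W(e)^o I_d W(e)^o$ is a consequence of $I_d$. On the other hand, Corollary~\ref{cor:3.9} identifies
\[W(e)^o I_d W(e)^o \;=\; W(e)^o I_\delta W(e)^o \;=\; \sum_{\lambda}\oplus\, I_\lambda,\]
with $\lambda$ running over partitions in $DP(e)$ of height greater than~$n$, which by Theorem~\ref{thm:3.2} is exactly the full space $I_e$ of multilinear pure $J$-trace identities of $M(n,n)$ in degree~$e$. Hence every such multilinear identity in any degree $e \geq d$ is a consequence of $I_d$. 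Since the characteristic is zero, an arbitrary pure $J$-trace identity is equivalent to the collection of its multilinear components, so the general statement drops out.

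The main obstacle is really upstream of the present statement: one needs Theorem~\ref{thm:2.7}, whose proof rests on the subtle closure properties of $I_d$ under conjugation established in Lemmas~\ref{lem:2}--\ref{lem:4}, and one needs the Schur--Jozefiak/Stembridge Q-Schur calculus behind Corollary~\ref{cor:3.9}. Given those, the only genuine content at this stage is the clean combinatorial observation that $\delta$ is the unique minimum-size strict partition of height greater than~$n$, which is what pins down the bound $\binom{n+2}{2}$ and makes $I_d$ a single simple ideal rather than an unwieldy sum.
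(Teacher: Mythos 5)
Your proposal is correct and follows essentially the same route as the paper: both use Theorem~\ref{thm:3.2} to identify $I_d$ with the single ideal $I_\delta$ (and to show there are no identities below degree $\binom{n+2}{2}$), then invoke Theorem~\ref{thm:2.7} and Corollary~\ref{cor:3.9} to show $W(e)^o I_\delta W(e)^o$ exhausts the multilinear identities in each higher degree, finishing with the characteristic-zero reduction to multilinear identities. Your explicit remark that $\delta$ is the unique strict partition of minimal size with height greater than~$n$, which makes the hypothesis $W(d)^oI_dW(d)^o=I_d$ automatic, is a point the paper leaves implicit but is exactly the intended justification.
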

\begin{proof} By theorem~\ref{thm:3.2} there are no \jtr~identities
of degree less than $d={n+2\choose2}$, and in degree $n+2\choose 2$
 the multilinear identities correspond to the two sided ideal of $W(d)^o$,  
 $I_\delta$.  By theorem~\ref{thm:2.7} for each $e>d$ all elements
of $W(e)^o I_\delta W(e)^o$ are consequences of $I_\delta$.
By corollary~\ref{cor:3.9} the product $W(e)^o I_\delta W(e)^o$
equals the sum of the two sided ideals $I_\lm$ where $\lm$ has height
greater than~$n$.  Finally,
by theorem~\ref{thm:3.2} again, this sum is precisely the space of
multilinear identities of degree~$e$, and since we are in characteristic
zero all identities are consequences of multilinear ones and the theorem
follows.
\end{proof}
Using the following lemma we can also describe the mixed \jtr-identties of $M(n,n)$.
\begin{lem} Let $f(x_1,\ldots,x_{n+1})=\sum_\alpha str(x_{n+1}u_\alpha)g_\alpha$ be a
multilinear pure \jtr-identity for $M(n,n)$, where each $u_\alpha$ is a polynomial (without
trace) in $\{J,x_1,\ldots,x_n\}$ and each $g_\alpha$ is a pure \jtr-polynomial.  Then
$F(x_1,\ldots,x_n)=\sum_\alpha u_\alpha g_\alpha$ is a multilinear mixed \jtr-identity for $M(n,n)$
of degree one smaller and $f$ is a consequence of $F$.
\end{lem}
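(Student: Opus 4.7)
The plan is to reformulate $f$ so that it factors through $F$ by a single supertrace, then invoke non-degeneracy of the supertrace bilinear form on $M(n,n)$ to derive both conclusions at once. Specifically, I aim to establish the equality $f(x_1,\ldots,x_{n+1}) = str(x_{n+1}F(x_1,\ldots,x_n))$ in the free \jtr\ algebra; from this, the identity status of $F$ and the consequenceness of $f$ each follow in a single step.

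The first step is the rewriting $f = str(x_{n+1}F)$. Each $g_\alpha$, being a product of supertraces, lies in the supercommutative ring of trace values and acts on $M(n,n)$ by a super-$E$-linear action. Combining super-cyclicity of $str$ with the super-centrality of $g_\alpha$ in a short sign check yields $str(x_{n+1}u_\alpha g_\alpha) = str(x_{n+1}u_\alpha)\,g_\alpha$; summing over $\alpha$ then produces the claimed formula. The sign bookkeeping here, especially when $f$ is odd and some $g_\alpha$ are odd, is the part that I will carry out most carefully.

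Next, to see that $F$ is a mixed identity, suppose for contradiction that there exist $a_1,\ldots,a_n \in M(n,n)_0$ with $b := F(a_1,\ldots,a_n) \ne 0$. The element $b$ is homogeneous, with coefficients in the supercommutative ring of trace values. From the formula of the first step and the hypothesis that $f$ vanishes on $M(n,n)_0$, one deduces $str(a_{n+1}b) = 0$ for every $a_{n+1} \in M(n,n)_0$. But the supertrace form on $M(n,n)$ is non-degenerate even when one argument is restricted to $M(n,n)_0$: pairing against $\alpha e_{ji} \in M(n,n)_0$, with $\alpha \in E_0$ or $\alpha \in E_1$ chosen according to the parity of the matrix unit $e_{ji}$ so as to keep the product in the degree zero block, recovers each coefficient of $b$ and forces $b = 0$. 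This contradiction shows that $F$ is a mixed \jtr-identity of degree $n$.

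Finally, the formula $f = str(x_{n+1}F)$ directly realizes $f$ as obtained from $F$ by multiplication by a fresh variable and application of $str$, a standard operation in the \jtr\ formalism; hence $f$ is a consequence of $F$. The main obstacle is completing the first step --- ensuring that the rewrite $f = str(x_{n+1}F)$ holds as an identity of free \jtr\ polynomials with all parity signs tracked correctly --- since once that formula is available, both conclusions drop out cleanly from non-degeneracy.
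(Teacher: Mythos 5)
Your proposal is correct and takes essentially the same route as the paper: rewrite $f=str(x_{n+1}\sum_\alpha u_\alpha g_\alpha)=str(x_{n+1}F)$ using the $E$-linearity (supercentrality) of the pure trace factors $g_\alpha$, then conclude both claims from the non-degeneracy of the supertrace. The paper's proof is a one-line version of exactly this argument, so your extra care with parity signs and with testing against matrix units is just a fuller writing-out of the same idea.
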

\begin{proof} Since the supertrace is  $E$-linear we have
$f=str(x_{n+1}\sum u_\alpha g_\alpha)$, and the lemma follows from the non-degeneracy of the
supertrace.
\end{proof}
The lemma and theorem combine to give this description of the \jtr-identities of $M(n,n)$.
\begin{thm} All  \jtr\ identities of $M(n,n)$ are consequences of
the identities of degree ${n+2\choose2}-1$.\end{thm}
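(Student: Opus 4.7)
Let $F(\xd)$ be any multilinear mixed \jtr-identity of $M(n,n)$ of degree $m$; it suffices to prove $F$ lies in the mixed T-ideal $T_0$ generated by the multilinear mixed \jtr-identities of degree $\binom{n+2}{2}-1$. The strategy is to bootstrap from the pure-identity theorem just proved, by forming $f(x_1,\ldots,x_{m+1})=str(x_{m+1}F)$, which is a multilinear pure \jtr-identity of $M(n,n)$ of degree $m+1$.

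If $m+1<\binom{n+2}{2}$, the pure-identity theorem forces $f\equiv 0$, and then non-degeneracy of the supertrace on $M(n,n)$ (applied exactly as in the preceding lemma) forces $F\equiv 0$. Otherwise the previous theorem expresses $f$ as a pure T-ideal consequence of multilinear pure identities $p_1,\ldots,p_k$ of degree $\binom{n+2}{2}$. Applying the preceding lemma to each $p_i$: after grouping terms by the trace containing a chosen variable $x_{j_i}$ and cyclically rotating to put $x_{j_i}$ in front, one writes $p_i=str(x_{j_i}F_i)$ with $F_i$ a multilinear mixed identity of degree $\binom{n+2}{2}-1$, and the lemma gives that $p_i$ is a mixed T-ideal consequence of $F_i$. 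Chaining these consequences shows $f=str(x_{m+1}F)$ itself lies in $T_0$.

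It remains to descend from $f\in T_0$ to $F\in T_0$. Since T-ideals are closed under substitution of variables, substituting $x_{m+1}$ by an arbitrary \jtr-polynomial $p$ yields $str(pF)\in T_0$; consequently in the quotient $R=F\langle X,\jtr\rangle/T_0$ we have $str(\bar p\,\bar F)=0$ for every $\bar p\in R$. Non-degeneracy of the supertrace, applied in $R$, should then force $\bar F=0$, that is, $F\in T_0$. I expect this final descent to be the main obstacle: one must justify that supertrace non-degeneracy really transports down to $R$, either by embedding $R$ in a suitable product of copies of $M(n,n)$ so that the trace pairing is inherited, or by tracking the correspondence $F\leftrightarrow str(x_{m+1}F)$ between multilinear mixed identities in degree $m$ and multilinear pure identities in degree $m+1$ carefully enough to see that the relevant T-ideal closures match on the two sides.
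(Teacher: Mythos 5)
Your plan is the paper's own: pass from a multilinear mixed identity $F$ of degree $m$ to the pure identity $str(x_{m+1}F)$ of degree $m+1$, invoke the pure-identity theorem, and use the preceding lemma to replace each degree-$\binom{n+2}{2}$ pure generator $p_i=str(x_{j_i}F_i)$ by a mixed generator $F_i$ of degree $\binom{n+2}{2}-1$. The one step you leave open --- descending from $str(x_{m+1}F)\in T_0$ to $F\in T_0$ --- is exactly the step the paper compresses into the phrase ``non-degeneracy of the supertrace,'' so your instinct that this is where the remaining work lies is sound.

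Of your two proposed remedies, the first does not work: embedding $R=F\langle X,\jtr\rangle/T_0$ into a product of copies of $M(n,n)$ so as to inherit a non-degenerate pairing would require the map $R\rightarrow U(M(n,n),\jtr)$ to be injective, which is equivalent to $T_0=I(M(n,n),\jtr)$ --- the statement being proved --- so that route is circular. The second remedy is the right one, and it needs no non-degeneracy in the quotient $R$, only injectivity in the free algebra: the map $G\mapsto str(x_{m+1}G)$ is a linear bijection from multilinear mixed polynomials of degree $m$ in $x_1,\ldots,x_m$ onto multilinear pure polynomials of degree $m+1$, whose inverse is precisely the operation of the Lemma (write $h=\sum_\beta str(x_{m+1}v_\beta)h_\beta$ and return $\sum_\beta v_\beta h_\beta$). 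Writing $str(x_{m+1}F)=\sum_i c_i\,\phi_i\bigl(str(x_{j_i}F_i)\bigr)=\sum_i c_i\,str\bigl(\phi_i(x_{j_i})\phi_i(F_i)\bigr)$ with the $\phi_i$ multilinear substitutions, one checks term by term --- using multilinearity, so that $x_{m+1}$ occupies one known position in each summand, and cyclicity of $str$ to rotate it to the front --- that the inverse bijection sends each summand into the mixed T-ideal generated by the $F_i$; injectivity of $G\mapsto str(x_{m+1}G)$ then identifies the sum of these images with $F$, giving $F\in T_0$. This is bookkeeping of the same kind as in the proof of the Lemma, but it does need to be carried out; with it your argument closes.
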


\end{document}